\theoremstyle{plain}
\newtheorem{thm}{Theorem}
\newtheorem{cor}[thm]{Corollary}
\newtheorem{lemma}[thm]{Lemma}
\newtheorem{prop}[thm]{Proposition}
\theoremstyle{remark}
\newtheorem*{rmk}{\textbf{Remark}}
\numberwithin{equation}{section}
\numberwithin{thm}{section}
\newcommand{\1}{\ensuremath{\mathds{1}}}
\newcommand{\norm}[1]{\left|\!\left|{#1}\right|\!\right|}
\newcommand{\dist}{\mathrm{dist}}
\newcommand\E{\mathbb E}
\newcommand\R{\mathbb R}
\renewcommand{\S}{\mathbb S}
\newcommand{\Trace}{\mathrm{Tr}\,}
\newcommand{\Vol}{\mathrm{Vol}}
\renewcommand{\j}{\boldsymbol{j}}
\newcommand{\FT}{\mathcal{F}}
\title[Filament structure of random waves ]{Filament structure of random waves }
\author{Melissa Tacy}
\address{Department of Mathematics, The University of Auckland, Auckland, New Zealand}
\email{melissa.tacy@auckland.ac.nz}
\subjclass[2020]{58J50, 35S05, 35P20, 60B10}
\keywords{Eigenfunction equidistribution at small scales, random waves, filament structure, scaring, phase space}
\date{}
\begin{document}
\maketitle

\begin{abstract}
We investigate the small-scale equidistribution properties of random waves, $u$, in $\R^{n}$. Numerical evidence suggests that such objects display a fine scale filament structure. We show that the X-ray of $|u|^{2}$ along any line segment is uniformly equidistributed. So any limiting behaviour must be weaker than $L^{2}$ scaring. On the other hand, we show that   at Planck scale in phase space there are (with high probability) logarithmic fluctuations above what would be expected given equidistribution. Taken together these results suggest that the filament structure may be a configuration space echo of the phase-space concentrations.  \end{abstract}

In this paper we are concerned with the small-scale structure of random plane waves. Random plane waves are functions $u$ of the following form:
$$u=\sum_{\xi_{j}\in\Lambda}c_{j}e^{\frac{1}{h} \langle x,\xi_{j}\rangle}$$
where $\Lambda\subset \R^{n}$ is a set of equally spaced momenta. Exact choices for $\Lambda$ vary somewhat between papers in this field but $\Lambda$ is usually a neighbourhood of the unit sphere $\S^{n-1}$. The randomness is injected through the coefficients $c_{j}$ which are chosen according to a probability distribution. Such functions $u$ were conjectured, by Berry \cite{Ber77}, to provide a good model for the behaviour of chaotic modes in billiard systems. 

There are a number of interesting probability distributions from which to draw the $c_{j}$. For example we may choose to look at cases where each $c_{j}$ is chosen as an independent random variable such as Gaussian or Rademacher (see for instance  \cite{Ber77}, \cite{Zel09},\cite{dCI20}). Alternatively in situations where it is preferable to be able to fix the $\ell^{2}$ norm of $c=[c_{1},\dots,c_{|\Lambda|}]^{T}$ the coefficients may be chosen from a uniform probability density on the high dimensional sphere $\S^{|\Lambda|-1}$ (see for example \cite{Zel96},\cite{BL13},\cite{Map13},\cite{Zel14} and \cite{H17}).

 In this paper we treat a model where each $c_{j}$ is a (independent, identically distributed) Gaussian random variable and $\Lambda=\Lambda_{\beta}$ is a set of $h$-separated momenta drawn from $\S^{n-1}\times[1-h^{\beta},1+h^{\beta}]$ for $0\leq\beta\leq 1$. This range of $\beta$ corresponds to the range of random waves for which we have numerical  experiments. The website of Alex Barnett \cite{webB} records a number of these experiments including videos generated at the AIM workshop \emph{Topological complexity of random sets} showing the effect of varying $\beta$. It is convenient to choose $\Lambda_{\beta}$ so that the momenta are equally spaced, this is however not strictly necessary. What is necessary is that the spacing between momenta is never less than $h$ (this is an uncertainty principle requirement) and that the number of momenta in a region scales with the volume of that region.  To allow normalisation we restrict our attention to the behaviour of $u$ inside the ball of radius one about zero, $B_{1}(0)$. Then the variance for the Gaussian random variables is  determined by adopting the convention that
$$\E\left[\norm{u}^{2}_{L^{2}(B_{1}(0))}\right]=\Vol(B_{1}(0)).$$
This then means that the variance $\sigma^{2}=\frac{1}{ N}$ where $N=|\Lambda|$. 

The key questions of quantum chaos (see for example the survey \cite{Zel19}) are concerned with limits of $\langle P u_{h},u_{h}\rangle$ where $P$ is a semiclassical pseudodifferential operator. A sequence of $u_{h}$ on a compact manifold $M$ with $h\to 0$ is said to equidistribute if
$$\langle P u_{h},u_{h}\rangle \to  \frac{1}{\Vol(S^{\star}M)}\int_{S^{\star}M}p\,d{L}$$
where $S^{\star}M$ is the unit co-tangent bundle equipped with Liouville measure and $p$ is the principal symbol of $P$. This leads to a natural question, are random waves equidistributed? 

It is reasonably easy to ascertain that if $p(x,\xi)$ (the symbol of $P$) is independent of $h$ then random waves are indeed equidistributed. A more subtle question pertains to the small-scale structure. If for example $p(x,\xi;h)$ is a symbol that is zero off a small (shrinking in $h$) region of space in $T^{\star}M$ does equidistribution (in an almost sure sense) still hold? If not does a weaker form of equidistribution hold where   $\langle P u,u\rangle$ is only  proportional to the phase-space volume of $p(x,\xi;h)?$ 

\begin{wrapfigure}{r}{0.5\textwidth}
\includegraphics[scale=0.4]{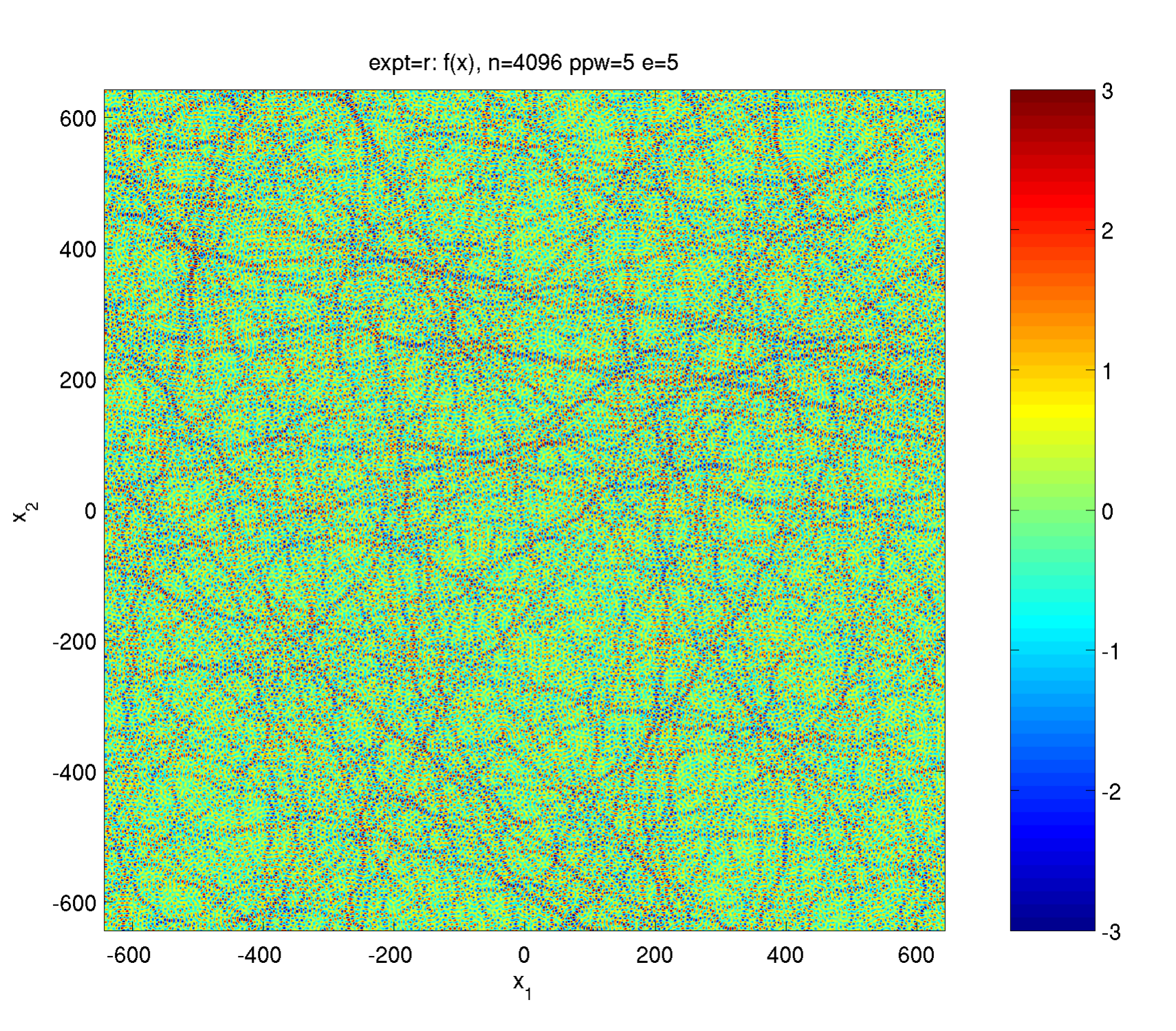}
\caption{A random plane waves in dimension two. Reproduced from \cite{webB} with permission of Alex Barnett.}\label{fig:rw}
\end{wrapfigure}
\leavevmode 
Numerical studies of random waves have suggested that at very small scales there are fluctuations that exceed equidistribution. As seen in Figure \ref{fig:rw} (reproduced with permission of Alex Barnett) random waves appear to have enhancements along some straight lines. This so named filament structure was intensely discussed at the AIM workshop \emph{Topological complexity of random sets} (see the report \cite{AIM09}). Some participants taking the opinion that the apparent structure was simply a numerical artefact other believing that it could be quantified. No firm conclusion was reached at the workshop or since.   In this paper we take some steps toward understanding what precisely gives rise to this filament structure. As observed in the AIM report \cite{AIM09} the linear structure appears to be constructed of many smaller filaments roughly aligned along a straight lines. The simplest form of failure to equidistribute would then be to have some straight line segments $\gamma$ where $\norm{u}_{L^{2}}$ was unusually large. Due to VanderKam \cite{VK97} and Zelditch \cite{Zel96} the maximum fluctuation we could expect would be logarithmic. In this paper we will see that this sort of concentration does not occur. In Theorems \ref{thm:Xrayexp} and \ref{thm:Xrayunif} we  show that the X-ray transform of the modulus squared of a random wave is uniformly equidistributed with high probability. That is for $\gamma_{(x,\xi)}$ a unit length line segment, properly contained in $B_{1}(0)$, from $x$ in direction $\xi$ the random variable $\norm{u}_{L^{2}(\gamma_{(x,\xi)})}=\left(\int_{\gamma_{(x,\xi)}}|u|^{2}dl_{\gamma}\right)^{1/2}$ obeys
$$\E\left[\norm{u}_{L^{2}(\gamma_{(x,\xi)})}\right]=1+o(1)\quad h\to 0$$
and that there is a $\kappa>0$ (details in Theorem \ref{thm:Xrayunif}) so that
\begin{align}Pr\left\{c: \exists (x,\xi), \left|\norm{u}_{L^{2}(\gamma_{(x,\xi)})}-1\right|>m(h)\right\}&\leq \exp\left(-CN^{2\kappa}m^{2}(h)\right)\label{unifXray}\\
&m(h)>N^{-\kappa+\epsilon},m(h)\to 0.\nonumber\end{align}
In terms of what we might expect to see in a numerical simulation the uniformity (in $(x,\xi)$) of the tail bound \eqref{unifXray} is crucial. If for example we were only able to show that for fixed $(x,\xi)$
$$Pr\left\{c:  \left|\norm{u}_{L^{2}(\gamma_{(x,\xi)})}-1\right|>m(h)\right\}\leq \exp\left(-CN^{2\kappa}m^{2}(h)\right)\quad m(h)>N^{-\kappa+\epsilon},m(h)\to 0$$
 we could conclude that if we picked a line segment then considered all potential random waves the chance that we would see a failure to equidistribute on that particular segment is small. For an example of this kind of result see \cite{E16} (in the setting where the coefficients are chosen according to a uniform distribution on a high dimensional sphere). Such a statement would not preclude the possibility that most random waves have some $(x,\xi)$ for which  $\norm{u}_{L^{2}(\gamma(x,\xi))}$ is large. Therefore to rule out structure of this form we do indeed need the stronger bound \eqref{unifXray}. In Section \ref{sec:radon} we establish Theorem \ref{thm:main1} which guarantees uniform equidistribution of $\norm{u}_{L^{2}(\gamma(x,\xi))}$.

\begin{thm}\label{thm:main1}
Let $F_{\beta}(x,\xi)$ be the random variable given by
$$F_{\beta}(x,\xi)=F_{\beta}(x,\xi;c)=\norm{u}_{L^{2}(\gamma(x,\xi))}=\norm{\sum_{\lambda_{j}\in\Lambda_{\beta}}c_{j}e^{\frac{i}{h}\langle \cdot,\xi_{j}\rangle}}_{L^{2}(\gamma(x,\xi))}$$
then
$$\E[F_{\beta}(x,\xi)]=1+o(1)\quad h\to 0.$$
Further there is a  $\kappa(n)$ (explicitly given by in Section \ref{sec:radon} by \eqref{kappan}) such that if $m:\R^{+}\to\R^{+}$, $m(h)\to 0$ as $h\to 0$,
$m(h)\geq{}N^{-\kappa(n)+\epsilon}$ and  the exception set $S(m)$ is given by
$$S(m)=\{c:\exists (x,\xi) \text{ so that } |F_{\beta}(x,\xi;c)-1|\geq{}m(h)\}$$
then
\begin{equation}Pr(S(m))\leq \exp\left(-N^{2\kappa(n)}m^{2}(h)\right).\label{eqn:exest}\end{equation}
\end{thm}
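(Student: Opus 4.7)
The plan is to express $F_\beta^2$ as a quadratic form in the Gaussian coefficient vector $c$, establish pointwise Gaussian concentration for $F_\beta(x,\xi;c)$, and then uniformise in $(x,\xi)$ by a net argument. For the expectation, write
$$F_\beta^2(x,\xi;c)=\int_{\gamma(x,\xi)}|u|^2\,dl=\langle M(x,\xi)c,c\rangle_{\mathbb{C}^N},\qquad M_{jk}(x,\xi)=\int_{\gamma(x,\xi)}e^{\frac{i}{h}\langle y,\xi_j-\xi_k\rangle}\,dl(y).$$
Since $\gamma$ has unit length, $M_{jj}=1$ and $\Trace(M)=N$; combined with $\sigma^2=1/N$ this gives $\E F_\beta^2=1$, so Jensen yields $\E F_\beta\leq 1$, and the matching lower bound $\E F_\beta\geq 1-o(1)$ follows by splitting on $\{|F_\beta-1|>\epsilon\}$ once the concentration estimate is in hand.

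For the pointwise tail, view $F_\beta(x,\xi;\,\cdot\,)$ as a seminorm on $\mathbb{C}^N$; it is Lipschitz in $c$ with constant $\norm{M(x,\xi)}_{\mathrm{op}}^{1/2}$. After rescaling $c=N^{-1/2}Z$ to standard Gaussians, the Gaussian concentration inequality for Lipschitz functions gives
$$Pr\bigl(|F_\beta(x,\xi)-\E F_\beta(x,\xi)|>t\bigr)\leq 2\exp\bigl(-cNt^2/\norm{M(x,\xi)}_{\mathrm{op}}\bigr).$$
To estimate $\norm{M}_{\mathrm{op}}$, parametrise $\gamma$ as $y=x+s\xi$, $s\in[0,1]$, so that $M_{jk}=e^{\frac{i}{h}\langle x,\xi_j-\xi_k\rangle}\,\mathrm{sinc}\bigl(\tfrac{(\xi_j-\xi_k)\cdot\xi}{2h}\bigr)$. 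The sinc factor is $O(1)$ on the slab $|(\xi_j-\xi_k)\cdot\xi|\lesssim h$ and decays like $h/|(\xi_j-\xi_k)\cdot\xi|$ outside it, so Schur's test reduces the operator-norm bound to a count of momenta of $\Lambda_\beta$ lying in $h$-thin slabs perpendicular to $\xi$, summed against harmonic weights in the slab index.

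For the uniform estimate, cover $B_1(0)\times\S^{n-1}$ by an $\eta$-net $\mathcal N$ of cardinality $|\mathcal N|\lesssim \eta^{-(2n-1)}$ and control the oscillation of $F_\beta$ inside each cell by
$$|F_\beta(x,\xi;c)-F_\beta(x',\xi';c)|\lesssim \norm{\nabla u}_\infty\cdot d((x,\xi),(x',\xi')),$$
combined with the semiclassical Bernstein bound $\norm{\nabla u}_\infty\lesssim h^{-1}\norm{u}_\infty$ and a standard Gaussian-supremum bound $\norm{u}_\infty\lesssim \sqrt{\log N}$ holding off a set of probability $O(N^{-A})$. Choosing $\eta$ so that the cell-wise oscillation is $\ll m(h)$ (concretely $\eta\approx m(h)h/\sqrt{\log N}$, keeping $|\mathcal N|$ polynomial in $N$), a union bound over $\mathcal N$ of the pointwise tail produces the claimed estimate \eqref{eqn:exest}, with $\kappa(n)$ emerging from the balance between the pointwise decay rate $N/\norm{M}_{\mathrm{op}}$ and the net cardinality $\eta^{-(2n-1)}$.

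The main obstacle is obtaining an $(x,\xi)$-uniform and $\beta$-sharp bound on $\norm{M(x,\xi)}_{\mathrm{op}}$: since $\Lambda_\beta$ is confined to the thin shell $\S^{n-1}\times[1-h^\beta,1+h^\beta]$, the number of momenta in a slab perpendicular to $\xi$ depends delicately on both $\beta$ and on the orientation of $\xi$, and the harmonic sinc tails must be summed carefully at both the equatorial and meridional scales. Feeding this slab count through the three-way balance -- pointwise concentration rate, net cardinality, and the Lipschitz constant in $(x,\xi)$ -- yields the explicit value of $\kappa(n)$ in \eqref{kappan}.
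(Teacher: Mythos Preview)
Your overall strategy matches the paper's: compute $\E[F_\beta^2]=1$, obtain Gaussian concentration from a Lipschitz bound in $c$, then uniformise in $(x,\xi)$ by a net and a union bound. The differences are tactical, and they concern precisely the step you flag as the main obstacle.

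For the Lipschitz constant in $c$, you propose to bound $\norm{M(x,\xi)}_{\mathrm{op}}$ directly by Schur's test on the sinc entries, counting momenta of $\Lambda_\beta$ in slabs. The paper instead observes that $\norm{M}_{\mathrm{op}}^{1/2}$ is exactly the norm of the restriction map $w\mapsto w|_{\gamma}$ on $O(h^\beta)$ quasimodes of $h^2\Delta-1$, and simply quotes the known $L^2$-restriction estimates for quasimodes on curves (the author's own submanifold bounds, rescaled to handle $\beta<1$). This delivers the explicit $\kappa(n)$ of \eqref{kappan} immediately. Your Schur route amounts to reproving those restriction estimates from scratch in the flat setting; it is plausible, but Schur alone does not obviously recover the sharp exponent (particularly the $h^{-1/4}$ in $n=2$), and the $\beta$- and orientation-dependence you worry about is exactly what the restriction theorem already packages.

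For the uniformisation, the paper again recycles the same input: it bounds $\partial_{x,\xi}F_\beta^2$ \emph{deterministically} via Cauchy--Schwarz and the restriction estimate applied to $\partial_{x_j}u$ (which is also an $O(h^\beta)$ quasimode), so the grid spacing is read off directly from $\kappa(n)$. Your route through $\norm{\nabla u}_\infty\lesssim h^{-1}\norm{u}_\infty$ and a probabilistic bound $\norm{u}_\infty\lesssim\sqrt{\log N}$ is valid, but it introduces an extra conditioning event and a logarithmically coarser net; the paper keeps the whole argument deterministic once the restriction bound is in hand.

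In short: correct plan, but the paper is more economical --- one analytic input (quasimode restriction to curves) is used twice, for both the Lipschitz constant in $c$ and the Lipschitz constant in $(x,\xi)$, and this sidesteps the slab-counting you identify as delicate.
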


\begin{rmk} We will see, in Section \ref{sec:radon}, that in fact
$$\E[F_{\beta}(x,\xi)^{2}]=1$$
independent of $h$. The error is introduced by approximating $\E[F_{\beta}(x,\xi)]$ by $\left(\E[F_{\beta}(x,\xi)^{2}]\right)^{1/2}$. The validity of the approximation follows from a concentration of measure argument which in turn explicitly controls the quantity.
$$\left|\E[F_{\beta}(x,\xi)]-\left(\E[F_{\beta}(x,\xi)^{2}]\right)^{1/2}\right|.$$
The explicit expressions for this control are found in Theorem \ref{thm:Xrayexp} (equations \eqref{eqn:expecF2}, \eqref{eqn:expecF3} and \eqref{eqn:expecFn}).
\end{rmk}

It is instructive to compare Theorem \ref{thm:main1} with general results about restriction of spectral clusters to curves. When $\beta=1$ the requirement that $\xi_{j}\in\Lambda_{\beta}$ ensures that $u$ is a spectral cluster of width one. Therefore we can apply the results of Burq-G\'{e}rard-Tvetkov \cite{BGT} and Hu \cite{Hu09} to obtain, that for any choice of coefficients,
\begin{align*}
\norm{u}_{L^{2}(\gamma(x,\xi))}&\lesssim h^{-\frac{1}{4}}\norm{u}_{L^{2}(B_{2}(0))}\\
\norm{u}_{L^{2}(\mathcal{C})}&\lesssim h^{-\frac{1}{6}}\norm{u}_{L^{2}(B_{2}(0))}\quad\text{$\mathcal{C}$ geodesically curved}.\end{align*}
The results of Theorem \ref{thm:main1} should be interpreted as saying that choices of coefficients $c=[c_{1},\dots,c_{N}]$ that saturate the general bounds are highly unusual. 

In the setting of $(M,g)$ a Riemannian manifold with ergodic geodesic flow the quantum ergodic restriction results of \cite{TZ13} and \cite{DZ13} imply that for density one subsequences of exact eigenfunctions $\phi_{j}$,
$$\norm{\phi_{j}}_{L^{2}(\gamma)}\to L(\gamma)$$
where $L(\gamma)$ is the length of the curve $\gamma$. The agreement between this and Theorem \ref{thm:main1} should be seen as evidence of the agreement between quantum ergodic systems and the predictions of the random wave model. 

We then turn our attention to the phase-space picture and consider a random variable $G_{\alpha,\beta,\mu}(x,\xi)$ which measures the phase-space concentration of $u$ near the point $(x,\xi)\in\R^{n}\times \S^{n-1}$ given by
\begin{equation}G_{\alpha,\beta,\mu}(x,\xi)=G_{\alpha,\beta,\mu}(x,\xi;c)=\norm{P^{\alpha,\mu}_{(x,\xi)}u(\cdot)}_{L^{2}}.\label{intro:Gdef}\end{equation}
Here $P^{\alpha,\mu}_{(x,\xi)}$ is a semiclassical pseudodifferential operator that localises $u$ in phase space near $(x,\xi)$. The parameter $\alpha$ controls the shape of the localisation, the parameter $\mu$ controls the distance from Planck scale. When $\mu=1$ the operator $P^{\alpha,\mu}_{(x,\xi)}$ localises $u$ up to the limits allowed by the uncertainty principle. As $\mu$ increases the localisation becomes more relaxed. Normalisations are chosen so that if $u$ is phase-space equidistributed $|G_{\alpha,\beta,\mu}(x,\xi)|=1+O(\mu^{-1})$. The details of symbol of $P^{\alpha,\mu}$ are described in Section \ref{sec:phasespace}. We discover that once $\mu$ is large, that is $\mu\approx h^{-\epsilon})$, we again obtain a uniform equidistribution result similar to that for the X-ray transform of $|u|^{2}$. However when $\mu\leq C$ we not only fail to obtain equidistribution but are able to show that with high probability there are logarithmic enhancements near some points $(x,\xi)$.  The intermediate scales $C\leq \mu\leq h^{-\epsilon}$  remain something of a mystery and will likely require very delicate analysis to fully resolve.

\begin{thm}\label{thm:mconcenphase}
Let $G_{\alpha,\beta,\mu}(x,\xi)$ be given by \eqref{intro:Gdef}. For $\mu\leq C$ and small enough (but independent of $h$)  there exist constants $C_{1},C_{2}$ so that
\begin{equation}C_{1}\sqrt{\log(1/h)}\leq \E\left[\norm{G_{\alpha,\beta,\mu}}_{L^{\infty}(B_{1}(0)\times \S^{n-1})}\right]\leq C_{2}\sqrt{\log(1/h)}.\label{intro:Pexpectinfty}\end{equation}
On the other hand if $\mu\geq h^{-\epsilon}$, then for $(x,\xi)\in B_{1}(0)\times \S^{n-1}$,
\begin{equation}
\E\left[G_{\alpha,\beta,\mu}(x,\xi)\right]=1+O\left(h^{\frac{\epsilon}{2}}\right).\label{intro:expectinfty}\end{equation}
Further if $\delta>0$ and $m(h)\geq{}\max\left(h^{\epsilon-\delta}, h^{\frac{2\epsilon(n+1)}{4(n-\beta)}-\delta}\right)$,
\begin{equation}Pr\{c:\exists(x,\xi),\text{ so that } \left|G_{\alpha,\beta,\mu}(x,\xi;c)-1\right|>m(h)\}\leq \exp(-N^{\frac{2\epsilon(n+1)}{4(n-\beta)}}m^{2}(h)).\label{eqn:Guniform}\end{equation}

\end{thm}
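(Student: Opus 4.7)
The statement has three parts, organized around the common observation that, after substituting $u=\sum_{j}c_{j}e^{i\langle \cdot,\xi_{j}\rangle/h}$,
$$G_{\alpha,\beta,\mu}^{2}(x,\xi;c)=\langle (P^{\alpha,\mu}_{(x,\xi)})^{\ast}P^{\alpha,\mu}_{(x,\xi)}u,u\rangle=\sum_{j,k}c_{j}\ol{c_{k}}\,M_{jk}(x,\xi),\qquad M_{jk}=\langle P^{\alpha,\mu}_{(x,\xi)}e_{\xi_{j}},\,P^{\alpha,\mu}_{(x,\xi)}e_{\xi_{k}}\rangle,$$
is a Gaussian quadratic form in $c$ (with $e_{\xi}:=e^{i\langle\cdot,\xi\rangle/h}$). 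Since $\E[c_{j}\ol{c_{k}}]=N^{-1}\delta_{jk}$, we get $\E[G^{2}]=N^{-1}\Trace M=N^{-1}\sum_{j}\|P^{\alpha,\mu}_{(x,\xi)}e_{\xi_{j}}\|_{L^{2}}^{2}$. For the pointwise expectation \eqref{intro:expectinfty}, semiclassical symbol calculus identifies each summand, up to lower-order corrections, with the restriction of $|p^{\alpha,\mu}_{(x,\xi)}|^{2}$ to the frequency slice $\{\xi=\xi_{j}\}$; summing over $\xi_{j}\in\Lambda_{\beta}$ with the $h$-separation reconstructs the full phase-space integral of $|p^{\alpha,\mu}|^{2}$, which by the normalisation of Section \ref{sec:phasespace} is $1+O(\mu^{-1})=1+O(h^{\epsilon})$ for $\mu\geq h^{-\epsilon}$. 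The passage from $\E[G^{2}]$ to $\E[G]$ proceeds exactly as in the remark after Theorem \ref{thm:main1}: a Hanson--Wright bound on the quadratic form controls $\Var G^{2}$, hence $|\E[G]-\sqrt{\E[G^{2}]}|=O(h^{\epsilon/2})$.

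The uniform tail \eqref{eqn:Guniform} follows from pointwise concentration combined with a covering argument. Hanson--Wright applied to $G^{2}$ at fixed $(x,\xi)$ yields
$$Pr\bigl(|G^{2}-\E G^{2}|>t\bigr)\leq 2\exp\bigl(-C\min(t^{2}/\|M\|_{\mathrm{HS}}^{2},\,t/\|M\|_{\mathrm{op}})\bigr).$$
The uniform $L^{2}$-boundedness of $P^{\alpha,\mu}$ gives $\|M\|_{\mathrm{op}}=O(N^{-1})$, while $\|M\|_{\mathrm{HS}}^{2}$ is estimated by counting pairs $(\xi_{j},\xi_{k})$ with both entries in the frequency support of $p^{\alpha,\mu}_{(x,\xi)}$; this phase-space volume count is what produces the exponent $2\epsilon(n+1)/(4(n-\beta))$ recorded in \eqref{eqn:Guniform}. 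To upgrade from pointwise to uniform, bound the Lipschitz constant of $(x,\xi)\mapsto G(x,\xi;c)$ by a polynomial power of $h^{-1}$ (differentiating the symbol of $P^{\alpha,\mu}_{(x,\xi)}$ in $(x,\xi)$ costs at most $h^{-1}$), apply the pointwise estimate to each point of an $h^{K}$-net in $B_{1}(0)\times\S^{n-1}$, and union-bound. The hypothesis $m(h)\geq h^{\epsilon-\delta}$ ensures the exponential tail dominates the $h^{-K(2n-1)}$ net cardinality.

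Finally, the two-sided estimate \eqref{intro:Pexpectinfty} for $\mu\leq C$. When $\mu$ is bounded rather than shrinking, $\|M\|_{\mathrm{op}}$ and $\|M\|_{\mathrm{HS}}$ are of order one, so the pointwise Hanson--Wright bound becomes a sub-gaussian tail on $|G|$ with $O(1)$ scale. Combining this with an $h^{K}$-net and the standard inequality $\E[\max_{i\leq M}X_{i}]\leq C\sqrt{\log M}$ for uniformly sub-gaussian $X_{i}$ delivers the upper bound $\E\|G\|_{L^{\infty}}\leq C_{2}\sqrt{\log(1/h)}$. The matching lower bound is the main obstacle, because it requires genuine independence rather than a one-sided tail. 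The plan is to exhibit $M\sim h^{-c}$ phase-space points $(x_{\ell},\xi_{\ell})\in B_{1}(0)\times\S^{n-1}$, separated at Planck scale in both position and frequency, for which the random variables $G(x_{\ell},\xi_{\ell})$ are approximately independent. Since $\mu$ is bounded, $P^{\alpha,\mu}_{(x_{\ell},\xi_{\ell})}u$ depends, to leading order, only on those $c_{j}$ with $\xi_{j}$ in a Planck-scale window around $\xi_{\ell}$, so Planck-separation in $\xi_{\ell}$ makes the relevant coefficient subsets essentially disjoint. A Gaussian-maximum minoration then gives $\E[\max_{\ell}G(x_{\ell},\xi_{\ell})]\gtrsim\sqrt{\log M}\sim\sqrt{\log(1/h)}$. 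Making the approximate independence quantitative enough to survive a Slepian/Sudakov-type comparison with a truly independent Gaussian family is the delicate technical step.
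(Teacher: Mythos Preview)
Your approach via Hanson--Wright for the quadratic form $G^{2}=c^{\ast}Mc$ is a legitimate alternative to the paper's route, which instead treats $G$ as a Lipschitz function of $c$ and uses Gaussian isoperimetry (Lemma~\ref{lem:measureconc}) together with the Lipschitz bound of Lemma~\ref{lem:Lipbnds}. For \eqref{intro:expectinfty} and \eqref{eqn:Guniform} both schemes land on essentially the same place: compute $\E[G^{2}]$ exactly, pass to $\E[G]$ by concentration, and then upgrade to a uniform statement by a polynomial net. One caution: the specific exponent $2\epsilon(n+1)/(4(n-\beta))$ in \eqref{eqn:Guniform} is what the paper extracts from its Lipschitz constant $L=N^{1/2}\mu^{-(n+1)/4}$; your Hanson--Wright route through $\|M\|_{\mathrm{HS}}^{2}\sim\mu^{-(n+1)}$ (this is Lemma~\ref{lem:tracebnds}) will not automatically reproduce that exact exponent without a careful translation from concentration of $G^{2}$ to concentration of $G$, so you should check that the numerology actually matches.

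For \eqref{intro:Pexpectinfty} the paper does something quite different from what you propose. It replaces $L^{\infty}$ by $L^{p_{h}}$ with $p_{h}\sim\delta\log(1/h)$ (Proposition~\ref{prop:infcont}), then computes $\E[G^{p_{h}}]$ at a \emph{fixed} $(x,\xi)$ by diagonalising the quadratic form and expanding $(\sum\lambda_{j}y_{j}^{2})^{M}$. The lower bound comes from the single term $J=[\boldsymbol{j},\dots,\boldsymbol{j}]$ once one knows some eigenvalue satisfies $\lambda_{\boldsymbol{j}}\geq c\,\Trace A$; this in turn is deduced from $\Trace A\sim 1$ and $\Trace A^{2}\sim\mu^{-(n+1)}$ (Lemma~\ref{lem:tracebnds}), which forces $(\sum\lambda_{i})^{2}/\sum\lambda_{i}^{2}\leq C$ when $\mu\leq C$. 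No independence or comparison lemma is needed.

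Your alternative for the lower bound has a real gap. First, $G$ is not a Gaussian process but the $L^{2}$ norm of one, so Slepian/Sudakov do not apply directly; you would need a small-ball or anticoncentration statement for each $G(x_{\ell},\xi_{\ell})$ individually. Second, and more seriously, even with genuinely independent $G(x_{\ell},\xi_{\ell})$ (disjoint frequency windows), the max over $M\sim h^{-c}$ points grows like $\sqrt{\log M}$ only if each $G(x_{\ell},\xi_{\ell})$ has a sufficiently heavy upper tail, i.e.\ $Pr(G>t)\gtrsim e^{-Ct^{2}}$ up to $t\sim\sqrt{\log M}$. A quadratic form $\sum\lambda_{j}y_{j}^{2}$ with many comparable eigenvalues concentrates too tightly for this; you need precisely the ``one dominant eigenvalue'' fact $\lambda_{\max}\gtrsim\Trace A$ that the paper isolates via the $\Trace A$ versus $\Trace A^{2}$ comparison. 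Your sketch does not identify this ingredient, and without it the minoration argument cannot close.
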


\begin{rmk}
The proof of Theorem \ref{thm:mconcenphase} depends on estimating $\norm{G_{\alpha,\beta,\mu}(\cdot,\cdot)}_{L^{\infty}}$ by an $L^{p_{h}}$ norm where $p_{h}$ is finite but growing in $h$. Then $\norm{G_{\alpha,\beta,\mu}(\cdot,\cdot)}^{p_{h}}_{p_{h}}$ is directly computed. It is the lower bound on this norm that requires $\mu\leq C$ (see equation \eqref{Tracelower}). The key technical dependancies for $\mu$ are determined in Lemma \ref{lem:tracebnds}, in particular the comparison between \eqref{Tracebnds} and \eqref{Tracesqbnd}. 
\end{rmk}

Therefore once we are at Planck scale we expect to see some regions of phase space that where $u$ displays concentration that is logarithmically larger than that predicted by equidistribution. This result should be seen as aligning with Burq-Lebeau's result \cite{BL13} that 
\begin{equation}C_{1}\sqrt{\log(1/h)}\leq \E\left[\norm{u}_{L^{\infty}}\right]\leq C_{2}\sqrt{\log(1/h)}.\label{BL}\end{equation}
Indeed when $\alpha=0$ the operator $P^{\alpha,\mu}_{(x,\xi)}$ localises $u$ to a small ball of radius $h$ around $x$. Since $u$ is constituted of waves of frequency of size $1/h$ the value of $|u(x)|$ cannot vary much on balls of radius $h$. Taking into account the normalisation we therefore get that
$$\norm{P^{0,\mu}_{(x,\xi)}}_{L^{2}}^{2}\approx|u(x)|^{2}.$$
Therefore the results of this paper for $\alpha=0$ should be seen as morally equivalent to the Burq-Lebeau result. 

The question then remains, just what are we seeing in numerical simulation that display filament structure? We know from Theorem \ref{thm:main1} any scaring along a line segment $\gamma(x,\xi)$ cannot be enough to register in the $L^{2}$ norm. However it is quite possible to have large number of logarithmically bright points aligned along a ray without the $L^{2}$ norm along that ray becoming large. To the human eye examining a numerical experiment, such a situation may appear as a scar. This suggests we should look at weaker forms of structure rather than $L^{2}$ scaring. One possibility, suggested by the phase-space failure to equidistribute, is that the bright points tend to align along rays that have a Planck-scale enhancement in the phase-space sense.

Throughout this paper we will use concentration of measure arguments to allow us to ``commute the expectation operator with a function $\phi$''. The idea is that if $\phi:[0,\infty)\to[0,\infty)$  is invertible, we will use measure concentration to find (see Section \ref{sec:mc}) sufficient conditions so that 
$$C_{1}\phi^{-1}\left(\E[\phi(X)]\right)\leq \E\left[X\right]\leq C_{2}\phi^{-1}\left(\E[\phi(X)]\right)\quad C_{1},C_{2}\in \R^{+}.$$
Here $X$ is a random variable that takes values in $[0,\infty)$. The rational for seeking this kind of estimate is to be able to chose $\phi$ so that $\E[\phi(X)]$ is easy (or at least tractable) to compute. Just by assuming $\phi$ is convex and strictly increasing we can get the upper bound (by Jensen). However, typically, we need the full inequality. In some cases we will in fact be able to find lower order control on $\left|\phi\left(\E[X]\right)-\E\left[\phi(X)\right]\right|$ therefore obtaining control on the ``Jensen gap''.

\section{Measure concentration}\label{sec:mc}

In this section we set up those measure concentration results relevant to this paper. We use those results to find sufficient conditions (on $X$ and $\phi$) so that
$$C_{1}\phi^{-1}\left(\E[\phi(X)]\right)\leq \E\left[X\right]\leq C_{2}\phi^{-1}\left(\E[\phi(X)]\right)\quad C_{1},C_{2}\in \R^{+}$$
or
$$\E\left[X\right]=\phi^{-1}\left(\E[\phi(X)]\right)+Er$$
where $Er$ is small compared to $\phi^{-1}\left(\E[\phi(X)]\right)$. The major measure concentration result we use here is analogous to the Levy concentration of measure for coefficients chosen uniformly on a high dimensional sphere. In that case the key driver of the proof was the isoperimetric inequality on spheres, which states that of all sets of fixed measure the ones with the ``smallest perimeter'' are hemispheres. To deal with Gaussian variables we simply replace this inequality with the Gaussian isoperimetric inequality, see \cite{SuCir74} (Theorem 1)  and \cite{Borell75} (Theorem 3.1), which states that of all sets of fixed measure the ones with ``smallest perimeter'' are half-planes. 

\begin{lemma}\label{lem:measureconc}
Let $c=[c_{1},\dots,c_{N}]^{T}$ be an $N$-tuple of independent Gaussian variables each with variance $\sigma^{2}=N^{-1}$.  Let $\mu$ be the joint $N$-dimensional Gaussian measure
\begin{equation}\mu=\frac{N^{N/2}}{(2\pi)^{N/2}}e^{-\frac{N|c|^{2}}{2}}\lambda^{N}\label{Gaussianmeasure}\end{equation}
where $\lambda^{N}$ is the standard $N$-dimensional Lebesgue measure. Suppose $G=G(c)$ is a random variable obeying the Lipschitz bounds
\begin{equation}|G(c)-G(d)|\leq L\norm{c-d}_{\ell^{2}}\label{Lipest}\end{equation}
Then if $M_{G}$ is a median for $G$ there is a constant $C$ so that 
\begin{equation}Pr\{G\geq M_{G}+t\}\leq Ce^{-\frac{Nt^{2}}{2L^{2}}}\label{concen1}\end{equation}
and
\begin{equation}Pr\{G\leq M_{G}-t\}\leq Ce^{-\frac{Nt^{2}}{2L^{2}}}.\label{concen2}\end{equation}
\end{lemma}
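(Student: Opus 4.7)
The plan is to reduce the statement to standard Gaussian concentration by rescaling the variance, and then appeal directly to the Gaussian isoperimetric inequality cited in the statement. First I would introduce the rescaled coordinates $z=\sqrt{N}\,c$, so that $z=(z_1,\dots,z_N)$ is a vector of independent \emph{standard} normal variables. Under this pushforward, the joint measure \eqref{Gaussianmeasure} becomes the standard $N$-dimensional Gaussian $\gamma_N$ on $\R^N$, and $G$ becomes the function
$$\tilde G(z)=G(z/\sqrt{N}).$$
The hypothesis \eqref{Lipest} then gives
$$|\tilde G(z)-\tilde G(w)|\leq L\norm{z/\sqrt{N}-w/\sqrt{N}}_{\ell^2}=\frac{L}{\sqrt{N}}\norm{z-w}_{\ell^2},$$
so $\tilde G$ is $(L/\sqrt{N})$-Lipschitz with respect to the standard Euclidean distance. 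Since the rescaling is a bijection, $M_G$ is still a median of $\tilde G$.

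Next I would invoke the Sudakov--Tsirelson--Borell isoperimetric inequality applied to the sublevel set
$$A=\{z\in\R^N:\tilde G(z)\leq M_G\}.$$
Because $M_G$ is a median, $\gamma_N(A)\geq 1/2$ and hence $\Phi^{-1}(\gamma_N(A))\geq 0$, where $\Phi$ denotes the standard normal CDF. The isoperimetric inequality asserts that for every $s>0$ the open $s$-neighbourhood $A_s=\{z:\dist(z,A)<s\}$ satisfies
$$\gamma_N(A_s)\geq \Phi\left(\Phi^{-1}(\gamma_N(A))+s\right)\geq \Phi(s).$$
The Lipschitz property implies that if $z\in A_s$ then $\tilde G(z)\leq M_G+(L/\sqrt{N})\,s$. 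Taking complements and setting $s=t\sqrt{N}/L$ yields
$$Pr\{G\geq M_G+t\}=Pr\{\tilde G\geq M_G+t\}\leq 1-\Phi(t\sqrt{N}/L)\leq \tfrac{1}{2}e^{-Nt^2/(2L^2)},$$
using the standard Gaussian tail bound $1-\Phi(u)\leq \tfrac{1}{2}e^{-u^2/2}$ valid for $u\geq 0$. This is \eqref{concen1}. The lower tail \eqref{concen2} follows from the symmetric argument with the superlevel set $B=\{z:\tilde G(z)\geq M_G\}$, which also has $\gamma_N$-measure at least $1/2$: any $z\in B_s$ obeys $\tilde G(z)\geq M_G-(L/\sqrt{N})s$, and the same isoperimetric estimate delivers the matching tail. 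Taking $C=1/2$ suffices.

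There is no serious obstacle in the argument. The only subtlety is the bookkeeping of the variance: because the coordinates $c_j$ have variance $1/N$ rather than unit variance, the rescaling $c\mapsto \sqrt{N}\,c$ contracts the effective Lipschitz constant by a factor $1/\sqrt{N}$, and this contraction is precisely what produces the dimensional gain $e^{-Nt^2/(2L^2)}$ in \eqref{concen1} and \eqref{concen2}. All of the deep content is contained in the Gaussian isoperimetric inequalities of \cite{SuCir74,Borell75}.
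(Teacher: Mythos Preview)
Your argument is correct and follows essentially the same route as the paper: both proofs take the sublevel set at the median, invoke the Sudakov--Tsirel'son--Borell Gaussian isoperimetric inequality to compare its $t/L$-enlargement with a half-space, and read off the Gaussian tail. The only cosmetic difference is that you first rescale to standard Gaussian coordinates (making the Lipschitz constant $L/\sqrt{N}$) whereas the paper works directly in the variance-$1/N$ coordinates and computes the half-space measure there; the content is identical.
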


\begin{proof}
Let the set $B\subset\R^{N}$ be given by
$$B=\{c\in\R^{N}:G(c)\leq M_{G}\}.$$
Since $G$ obeys the Lipschitz estimate
$$|G(c)-G(d)|\leq L\norm{c-d}_{\ell^{2}}$$
$$B_{t/L}= \left\{ c\in\R^{N}: \dist(c,B)<\frac{t}{L}\right\}\subset \{c\in \R^{N}\mid G(c)< M_{G}+t\}. $$
Then the Gaussian isoperimetric inequality  (see \cite{SuCir74} and \cite{Borell75})  tells us that
$$\mu(B_{t/L})\geq\mu(H_{t/L})$$
for any half-plane with measure $\mu(B)=\mu(H)=\frac{1}{2}$. We calculate with the half-plane $\{y\mid y_{1}<0\}$. So
\begin{align*}
\mu(H_{t/L})&=\frac{N^{1/2}}{\sqrt{2\pi}}\int_{-\infty}^{\frac{t}{L}}e^{-\frac{Ns^{2}}{2}}ds\\
&=1-\frac{N^{1/2}}{\sqrt{2\pi}}\int_{\frac{t}{L}}^{\infty}e^{-\frac{Ns^{2}}{2}}ds\\
&=1-O\left(e^{-\frac{N t^{2}}{2L^{2}}}\right).\end{align*}
That is,
$$\mu\{c\in \R^{N}:G(c)<M+t\}=1-O\left(e^{-\frac{Nt^{2}}{2L^{2}}}\right)$$
so
$$\mu\{c\in\R^{N}:G(c)\geq M+t\}\leq Ce^{-\frac{Nt^{2}}{2L^{2}}}$$
that is \eqref{concen1}. Equation \eqref{concen2} follows from a similar argument. 
\end{proof}

We are now in a position to explore the relationship between $\phi^{-1}\left(\E[\phi(X)]\right)$ and $\E[X]$. We model our arguments on those of Burq-Lebeau \cite{BL13} (Theorems 4 and 5) when $\phi(\tau)=\tau^{p}$. We say that $\phi$ is admissible if:
\begin{enumerate}
\item $\phi\in C^{1}$ and strictly increasing.
\item $\phi(0)=0$.
\item $\phi(|\cdot|)$ obeys a generalised Minkowski inequality (with respect to $\mu$ the joint Gaussian measure)
\begin{equation}
\phi^{-1}\left(\int\phi(|X(c)+Y(c)|)d\mu\right)\leq \phi^{-1}\left(\int\phi(|X(c)|)d\mu\right)+\phi^{-1}\left(\int\phi(|Y(c)|)d\mu\right).\label{genMin}\end{equation}
\end{enumerate}
Notice that $\tau^{p}$ clearly satisfies all requirements. There are a number of different conditions that could be placed on $\phi$ to ensure (3). For example assuming (in addition to (1) and (2)) that both the functions $\phi(|\cdot|)$ and $-\log|\phi'(e^{\tau})|$ are convex is sufficient (see \cite{Mulholland50} for this and other possible conditions). 

\begin{thm}\label{thm:mcphi}
Suppose that $\phi$ is admissible, $|\phi'(\tau)|\leq C|\tau|^{q-1}$ for some $q\geq{}1$. Let $c=[c_{1},\dots,c_{N}]^{T}$ where each $c_{i}$ is a Gaussian random variable with $\sigma^{2}=N^{-1}$. Now if   $X=X(c)$ is a non-negative, random variable that obeys the Lipschitz bound
$$|X(c)-X(d)|\leq L\norm{c-d}_{\ell^{2}}$$
with $L\leq N^{1/2-\kappa}$ for some $\kappa>0$ then
$$\E\left[X\right]=\phi^{-1}\left(\E[\phi(X)]\right)+O\left(\phi^{-1}\left(\frac{C}{N^{\kappa q}}\Gamma(q/2)\right)\right).$$
In particular  if (for small enough $\epsilon$)
$$\phi^{-1}\left(\frac{C}{N^{\kappa q}}\Gamma(q/2)\right)\leq \epsilon \phi^{-1}\left(\E[\phi(X)]\right)$$ then there are $C_{1},C_{2}\in\R^{+}$ so that
\begin{equation}C_{1}\phi^{-1}\left(\E[\phi(X)]\right)\leq \E\left[X\right]\leq C_{2}\phi^{-1}\left(\E[\phi(X)]\right).\label{eqn:phimove}\end{equation}
\end{thm}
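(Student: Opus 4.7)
The plan is to anchor both $\E[X]$ and $\phi^{-1}(\E[\phi(X)])$ to the median $M_X$ of $X$, control each gap via the Gaussian concentration of Lemma \ref{lem:measureconc}, and then combine by the triangle inequality. The key subtlety is that $\phi$ is not globally Lipschitz, so the gap between $\phi^{-1}(\E[\phi(X)])$ and $M_X$ cannot be handled by simply pushing a Lipschitz bound through $\phi$; this is precisely what the generalised Minkowski property \eqref{genMin} in the definition of admissibility is designed to bypass.

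First I will estimate the central moments of $X$. The Lipschitz hypothesis puts us in the setting of Lemma \ref{lem:measureconc}, giving $Pr\{|X - M_X|\geq t\} \leq 2Ce^{-Nt^{2}/(2L^{2})}$. Layer-cake integration and the substitution $s = Nt^{2}/(2L^{2})$ produce
$$\E[|X-M_X|^{p}] \;=\; p\int_{0}^{\infty} t^{p-1}\,Pr\{|X-M_X|\geq t\}\,dt \;\leq\; C_{p} L^{p} N^{-p/2}\,\Gamma(p/2),$$
which under $L \leq N^{1/2-\kappa}$ becomes $C_{p} N^{-\kappa p}\Gamma(p/2)$. Taking $p=1$ yields $|\E[X] - M_X| \leq \E[|X - M_X|] \leq C N^{-\kappa}$, so $\E[X]$ is close to $M_X$ on the scale $N^{-\kappa}$.

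Next I use \eqref{genMin} as a two-sided triangle inequality. Writing $X = M_X + (X - M_X)$ and $M_X = X + (M_X - X)$, applying \eqref{genMin} to each decomposition (and using that $\phi^{-1}(\E[\phi(M_X)]) = M_X$ because $M_X \geq 0$ is constant and $X\geq 0$) gives
$$\left|\phi^{-1}(\E[\phi(X)]) - M_X\right| \;\leq\; \phi^{-1}\!\left(\E[\phi(|X - M_X|)]\right).$$
Because $\phi(0)=0$ and $|\phi'(\tau)|\leq C|\tau|^{q-1}$, one has $\phi(\tau) \leq (C/q)\tau^{q}$ for $\tau\geq 0$, so by the moment bound above
$$\E[\phi(|X - M_X|)] \;\leq\; \tfrac{C}{q}\,\E[|X-M_X|^{q}] \;\leq\; C' N^{-\kappa q}\,\Gamma(q/2).$$
Monotonicity of $\phi^{-1}$ then gives $|\phi^{-1}(\E[\phi(X)]) - M_X| \leq \phi^{-1}\!\left(C' N^{-\kappa q}\Gamma(q/2)\right)$.

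Finally, the $O(N^{-\kappa})$ error from the first step is absorbed into the second: evaluating $\phi(\tau)\leq (C/q)\tau^{q}$ at $\tau = N^{-\kappa}$ shows $N^{-\kappa} \leq \phi^{-1}(C'' N^{-\kappa q})$, so both deviations are bounded by a constant multiple of $\phi^{-1}(C' N^{-\kappa q}\Gamma(q/2))$. Combining via the triangle inequality yields
$$\E[X] \;=\; \phi^{-1}(\E[\phi(X)]) \;+\; O\!\left(\phi^{-1}\!\left(\tfrac{C}{N^{\kappa q}}\Gamma(q/2)\right)\right),$$
and the bilateral bound \eqref{eqn:phimove} is then automatic, since whenever the error is at most $\epsilon\,\phi^{-1}(\E[\phi(X)])$ one immediately gets $(1-\epsilon)\phi^{-1}(\E[\phi(X)]) \leq \E[X] \leq (1+\epsilon)\phi^{-1}(\E[\phi(X)])$. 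The only genuine obstacle is the two-sided Minkowski step: without admissibility we would only have the upper direction (from Jensen) and not the matching lower bound, and hence no control on the Jensen gap.
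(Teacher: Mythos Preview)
Your proof is correct and follows essentially the same route as the paper: anchor both $\E[X]$ and $\phi^{-1}(\E[\phi(X)])$ to the median $M_X$, control the first gap by Gaussian concentration and layer-cake, control the second via the reverse triangle inequality coming from the generalised Minkowski property, and combine. The only cosmetic difference is that the paper bounds $\E[\phi(|X-M_X|)]$ directly by the layer-cake identity $\int_0^\infty \phi'(s)\,\mu(\{|X-M_X|\geq s\})\,ds$ together with $|\phi'(s)|\leq Cs^{q-1}$, whereas you first bound the $q$-th moment and then use $\phi(\tau)\leq (C/q)\tau^q$; your absorption of the $N^{-\kappa}$ term into $\phi^{-1}(C N^{-\kappa q})$ is in fact cleaner than the paper's corresponding line.
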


\begin{rmk}
The parameter $\kappa$ controls the Lipschitz bound. Later (see \ref{kappan} and \ref{kappanG}) for specific random variables we will determine suitable $\kappa$.
\end{rmk}

\begin{proof}
Let $M_{X}$ be a median for the random variable $X$. We will proceed by relating both $\phi^{-1}(\E[\phi(X)])$ and $\E[X]$ to $M_{X}$. Consider
\begin{equation}\phi\left(\left|\phi^{-1}(\E[\phi(X)])-M_{X}\right|\right)=\phi\left(\left|\phi^{-1}\left(\int\phi(X)d\mu\right)-\phi^{-1}\left(\int\phi(M_{X})d\mu\right)\right|\right).\label{eq:Ephi}\end{equation}
Note that by using the same argument that yields the reverse triangle inequality from the triangle inequality we have that
$$\left|\phi^{-1}\left(\int\phi(X)d\mu\right)-\phi^{-1}\left(\int\phi(M_{X})d\mu\right)\right|\leq\phi^{-1}\left(\int\phi(|X-M_{X}|)d\mu\right).$$
So
\begin{align*}
\phi\left(\left|\phi^{-1}(\E[\phi(X)])-M_{X}\right|\right)&\leq\int\phi(|X-M_{X}|)d\mu\\
&=\int_{0}^{\infty}\phi'(s)\mu(\{|X-M_{X}|\geq{}s\})ds\\
&\leq C\int_{0}^{\infty}s^{q-1}e^{-\frac{Ns^{2}}{2L^{2}}}ds.\end{align*}
If $L\leq N^{1/2-\kappa}$ we then get
$$\phi\left(\left|\phi^{-1}(\E[\phi(X)])-M_{X}\right|\right)\leq C\int_{0}^{\infty}s^{q-1}e^{-cN^{2\kappa}s^{2}}ds= \frac{C}{N^{\kappa q}}\Gamma(q/2).$$
On the other hand
\begin{align*}
|\E[X]-M_{X}|&\leq \int|X-M_{X}|d\mu\\
&=\int_{0}^{\infty}\mu(|X-M_{X}|\geq{}s)ds\\
&\leq \int_{0}^{\infty}e^{-cN^{2\kappa}s^{2}}ds\\
&\leq \frac{C}{N^{\kappa}}\int_{0}^{\infty}e^{-s^{2}}ds\leq \frac{C}{N^{\kappa}}.\end{align*}
Now since $\frac{C}{N^{\kappa}}\leq \frac{C}{N^{\kappa q}}\Gamma(q/2)$ we have that
$$|E[X]-M_{X}|\leq \frac{C}{N^{\kappa q}}\Gamma(q/2).$$
Therefore
$$E[X]=\phi^{-1}\left(E[\phi(X)]\right)+O\left(\phi^{-1}\left(\frac{C}{N^{\kappa q}}\Gamma(q/2)\right)\right).$$
\end{proof}

Since this results ensures that the expectation and median are very close together we can also state the concentration of measure results in terms of the expectation.

\begin{cor}\label{cor:expconcen}
Under the conditions of Theorem \ref{thm:mcphi},
\begin{equation}
\mu\left(\{c: |X(c)-\phi^{-1}\left(\E[\phi(X(c))]\right)|\geq{}t\}\right)\leq \exp(-CN^{2\kappa}t^{2})\label{eqn:expconcen}\end{equation}
for all $t\geq{}2\phi^{-1}\left(\frac{C}{N^{\kappa q}}\Gamma(q/2)\right)$.
\end{cor}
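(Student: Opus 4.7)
The plan is to transfer the concentration-of-measure bound from Lemma \ref{lem:measureconc}, which is stated about the median $M_X$, over to the centering point $\phi^{-1}(\E[\phi(X)])$ using the quantitative comparison between these two quantities that was already extracted in the proof of Theorem \ref{thm:mcphi}.

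First, I would set $\delta := \phi^{-1}\!\left(\frac{C}{N^{\kappa q}}\Gamma(q/2)\right)$ and recall from the argument of Theorem \ref{thm:mcphi} that
$$\bigl|\phi^{-1}(\E[\phi(X)]) - M_X\bigr|\leq \delta,$$
since that inequality was obtained from the reverse-triangle step $\phi(|\phi^{-1}(\E[\phi(X)])-M_X|)\leq \int \phi(|X-M_X|)\,d\mu$ combined with the Lipschitz bound on $X$ and the Gaussian tail estimate. This is the crucial input that converts control of the Jensen gap into a bound on the displacement of the centering point.

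Next, for any $t\geq 2\delta$ I would use the elementary inclusion
$$\bigl\{c: |X(c) - \phi^{-1}(\E[\phi(X)])| \geq t\bigr\}\subseteq \bigl\{c: |X(c)-M_X|\geq t-\delta\bigr\}\subseteq \bigl\{c: |X(c)-M_X|\geq t/2\bigr\},$$
where the last inclusion uses $t-\delta \geq t/2$. Then I would directly apply Lemma \ref{lem:measureconc} with the Lipschitz constant $L\leq N^{1/2-\kappa}$, which gives $N/(2L^2)\geq \tfrac{1}{2}N^{2\kappa}$, so that
$$\mu\bigl(\{|X-M_X|\geq t/2\}\bigr)\leq C\exp\!\left(-\frac{N\,(t/2)^2}{2L^2}\right)\leq C\exp(-c N^{2\kappa} t^2).$$
Absorbing constants yields the claimed bound $\exp(-CN^{2\kappa}t^2)$.

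There is no real obstacle here: the entire content is packaged in Theorem \ref{thm:mcphi} and Lemma \ref{lem:measureconc}. The only thing to be mildly careful about is the threshold $t\geq 2\delta$, which is exactly what is required so that the shift from $M_X$ to $\phi^{-1}(\E[\phi(X)])$ costs at most a factor of $2$ inside the exponent (and can therefore be absorbed into the constant $C$). No additional estimates on $\phi$ are needed beyond admissibility and the growth hypothesis $|\phi'(\tau)|\leq C|\tau|^{q-1}$ already used to produce $\delta$.
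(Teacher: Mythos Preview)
Your proposal is correct and follows essentially the same approach as the paper's proof: recall the bound $\bigl|\phi^{-1}(\E[\phi(X)])-M_X\bigr|\leq \delta$ established inside the proof of Theorem~\ref{thm:mcphi}, use it together with the triangle inequality to pass from $|X-\phi^{-1}(\E[\phi(X)])|\geq t$ to $|X-M_X|\geq t/2$, and then invoke Lemma~\ref{lem:measureconc} with $L\leq N^{1/2-\kappa}$. The paper's version is simply terser.
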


\begin{proof}
We saw in the proof of Theorem \ref{thm:mcphi} that
$$\left|\phi^{-1}\left(\E[\phi(X)]\right)-M_{X}\right|\leq \phi^{-1}\left(\frac{C}{N^{\kappa q}}\Gamma(q/2)\right).$$
So if 
$$|X-\phi^{-1}\left(\E[\phi(X)]\right)|\geq{}t$$
with $t\geq{}\phi^{-1}\left(\frac{C}{N^{\kappa q}}\Gamma(q/2)\right)$,
$$|X-M|\geq{}t/2.$$
The corollary follows the from concentration of measure for Gaussian variables. 

\end{proof}

\section{The X-ray transform of $|u|^{2}$}\label{sec:radon}
In this section we study the random variable obtained by taking a X-ray transform of $|u|^{2}$ along a line segment $\gamma$. That is we define the random variable
\begin{equation}F_{\beta}(x,\xi)=F_{\beta}(x,\xi;c)=\left(\int_{\gamma_{(x,\xi)}}|u|^{2}dl_{\gamma}\right)^{1/2}=\norm{u}_{L^{2}(\gamma_{(x,\xi)})}\label{eqn:Xray}\end{equation}
where $\gamma_{(x,\xi)}$ is the unit line segment, properly contained in $B_{1}(0)$, from $x$ in direction $\xi$. We will show that this random variable is equidistributed both in the sense that for fixed $(x,\xi)$, $F_{\beta}(x,\xi)$ is equidistributed but also in the stronger sense that the probability that $F_{\beta}(x,\xi)$ fails equidistribution for any $(x,\xi)$ is exponentially small. Since $\gamma_{(x,\xi)}$ is of unit length a uniform equidistribution statement for $F_{\beta}$ takes the following form. For every $(x,\xi)$, 
$$\E[F_{\beta}(x,\xi)]=1+o(1)$$
and
$$Pr\{c: \exists (x,\xi) \text{ so that } |F_{\beta}(x,\xi;c)-1|\geq{}m(h)\}\leq C\exp\left(cN^{2\kappa}m^{2}(h)\right) \kappa>0 $$
here $m(h)\to 0$ as $h\to 0$ but $m(h)>N^{-\kappa+\epsilon}$ to ensure exponential decay. In Theorem \ref{thm:Xrayunif} we will obtain explicit representations for valid $\kappa$, $m(h)$ and an explicit decay for the error term in the expectation.

\begin{thm}\label{thm:Xrayexp}
Suppose that for every $(x,\xi)\in B_{1}(0)\times \S^{n-1}$ the random variable $F_{\beta}(x,\xi)$ is given by \eqref{eqn:Xray}. Then  for $n=2$
\begin{equation}\E\left[F_{\beta}(x,\xi)\right]=1+O\left(\frac{1}{N^{\frac{1}{2}-\frac{\beta}{4}}}\right),\label{eqn:expecF2}\end{equation}
for $n=3$
\begin{equation}\E\left[F_{\beta}(x,\xi)\right]=1+O\left(\frac{\sqrt{\log(N)}}{N^{1/2}}\right)\label{eqn:expecF3}\end{equation}
and for  $n\geq{}3$,
\begin{equation}\E\left[F_{\beta}(x,\xi)\right]=1+O\left(\frac{1}{N^{1/2}}\right).\label{eqn:expecFn}\end{equation}
\end{thm}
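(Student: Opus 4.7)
The plan is to follow the approach signalled in the remark after Theorem~\ref{thm:mcphi}: first compute $\E[F_\beta^{2}]$ in closed form, then invoke Theorem~\ref{thm:mcphi} with $\phi(\tau)=\tau^{2}$ to control the Jensen gap $|\E[F_\beta]-\sqrt{\E[F_\beta^{2}]}|$. Writing
\[
F_{\beta}(x,\xi;c)^{2}=\int_{\gamma_{(x,\xi)}}|u|^{2}\,dl=\sum_{j,k}c_{j}\bar c_{k}\int_{\gamma_{(x,\xi)}}e^{i\langle y,\xi_{j}-\xi_{k}\rangle/h}\,dl_{y},
\]
using $\E[c_{j}\bar c_{k}]=\sigma^{2}\delta_{jk}=\delta_{jk}/N$ and the fact that $\gamma_{(x,\xi)}$ has unit length, the off-diagonal terms vanish in expectation and $\E[F_{\beta}^{2}]=1$ exactly. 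Theorem~\ref{thm:mcphi} with $\phi(\tau)=\tau^{2}$ (so $q=2$, $\phi^{-1}(s)=\sqrt{s}$, $\Gamma(q/2)=1$) then yields
\[
\E[F_{\beta}(x,\xi)]=\sqrt{\E[F_{\beta}^{2}]}+O\!\left(N^{-\kappa}\right)=1+O(N^{-\kappa}),
\]
as soon as one can show that the Lipschitz constant of $c\mapsto F_{\beta}(x,\xi;c)$ on $\ell^{2}$ satisfies $L\leq N^{1/2-\kappa}$. The hypotheses on $\phi$ in Theorem~\ref{thm:mcphi} are immediate for $\phi(\tau)=\tau^{2}$.

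The second step is to control this Lipschitz constant. Since $F_{\beta}(x,\xi;c)=\|T_{\gamma}c\|_{L^{2}(\gamma_{(x,\xi)})}$, where $T_{\gamma}\colon\ell^{2}\to L^{2}(\gamma_{(x,\xi)})$ is the linear operator $T_{\gamma}c=\sum_{j}c_{j}e^{i\langle \cdot,\xi_{j}\rangle/h}|_{\gamma_{(x,\xi)}}$, the reverse triangle inequality gives $L\leq\|T_{\gamma}\|_{\mathrm{op}}=\|T_{\gamma}^{*}T_{\gamma}\|_{\mathrm{op}}^{1/2}$. The $N\times N$ Gram matrix $T_{\gamma}^{*}T_{\gamma}$ has entries
\[
(T_{\gamma}^{*}T_{\gamma})_{jk}=e^{-i\langle x,\xi_{j}-\xi_{k}\rangle/h}\int_{0}^{1}e^{-it\langle\xi,\xi_{j}-\xi_{k}\rangle/h}\,dt,
\]
of magnitude $|\mathrm{sinc}(\langle\xi,\xi_{j}-\xi_{k}\rangle/(2h))|\leq\min\bigl(1,\,2h/|\langle\xi,\xi_{j}-\xi_{k}\rangle|\bigr)$. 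Schur's test then bounds $\|T_{\gamma}^{*}T_{\gamma}\|_{\mathrm{op}}$ by $\max_{j}\sum_{k}\min(1,2h/|\langle\xi,\xi_{j}-\xi_{k}\rangle|)$, which, partitioning into the shells $\{|\langle\xi,\xi_{k}-\xi_{j}\rangle|\in[mh,(m+1)h]\}$, is a weighted count of $\xi_{k}\in\Lambda_{\beta}$ with weight $\min(1,1/m)$ per shell.

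The three stated rates then emerge from the dimension-dependent geometry of these shells. Parameterising $\eta=t'\xi+\eta'$ with $\eta'\perp\xi$, the intersection of the slab $\{|\langle\xi,\eta-\xi_{j}\rangle|\leq h\}$ with the annulus $\{1-h^{\beta}\leq|\eta|\leq 1+h^{\beta}\}$ is, for $|\langle\xi,\xi_{j}\rangle|$ bounded away from $1$, an annular shell in $\R^{n-1}$ of volume $\sim h^{1+\beta}(1-\langle\xi,\xi_{j}\rangle^{2})^{(n-3)/2}$; near the tangent directions $|\langle\xi,\xi_{j}\rangle|\to 1$ the shell degenerates to a cap of radius $\sim h^{\beta/2}$, which must be handled separately. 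I expect that for $n\geq 4$ the geometric factor stays bounded and the shell sum is $O(1)$, giving $\kappa=1/2$ and the $1/N^{1/2}$ rate; for $n=3$ the factor equals $1$ but the harmonic-type tail $\sum_{m}1/m$ produces an extra $\log(1/h)$, yielding the $\sqrt{\log N}/N^{1/2}$ rate; for $n=2$ the divergent factor $(1-t^{2})^{-1/2}$ saturates at $h^{-\beta/2}$ near tangent directions, driving a $\beta$-dependent Lipschitz bound $\|T_{\gamma}\|\lesssim N^{\beta/4}$ and hence $\kappa=1/2-\beta/4$. The main technical obstacle will be the low-dimensional Schur estimate: one must carefully treat the near-tangent contributions where the generic volume formula breaks down, compare the discrete shell sum to its continuous analogue, and track the constants and logarithmic factors that produce \eqref{eqn:expecF2}--\eqref{eqn:expecFn}.
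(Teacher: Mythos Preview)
Your overall strategy coincides with the paper's: compute $\E[F_\beta^{2}]=1$ directly from the independence of the $c_j$, then invoke Theorem~\ref{thm:mcphi} with $\phi(\tau)=\tau^{2}$ so that the error is $O(N^{-\kappa})$ once the Lipschitz constant satisfies $L\le N^{1/2-\kappa}$. The divergence is in how the Lipschitz constant is obtained.

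The paper does not analyse the Gram matrix at all. Instead it writes
\[
|F_\beta(x,\xi;c)-F_\beta(x,\xi;d)|\le \|u(c)-u(d)\|_{L^{2}(\gamma_{(x,\xi)})}\lesssim h^{-\beta\delta(n)+\frac{\beta-1}{2}}\|u(c)-u(d)\|_{L^{2}(B_1(0))}
\]
by quoting the submanifold restriction estimates of \cite{Tacy10} (after a rescaling $u_\beta(x)=u(h^{1-\beta}x)$ that turns the $O(h^{\beta})$ quasimode into an $O(h^{\beta})$ quasimode at semiclassical scale $h^{\beta}$), and then uses the almost-orthogonality of the $e^{i\langle \cdot,\xi_j\rangle/h}$ on $B_{1}(0)$ to bound $\|u(c)-u(d)\|_{L^{2}}$ by $\|c-d\|_{\ell^{2}}$. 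The exponent $\delta(n)$ in \eqref{eq:deltan} and the $\log$-loss at $n=3$ are imported wholesale from those references, which immediately gives the three cases \eqref{eqn:expecF2}--\eqref{eqn:expecFn}.

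Your route---Schur's test on $T_\gamma^{*}T_\gamma$ with a direct slab-by-slab count of the $\xi_k\in\Lambda_\beta$---is in effect a from-scratch proof of exactly the restriction bound the paper cites, specialised to superpositions of plane waves on a straight segment. It is a genuine alternative: more self-contained (no appeal to \cite{Tacy10} or \cite{BGT}), and it makes the dependence on $\beta$ and the near-tangent degeneration completely explicit, at the cost of carrying out the shell counting yourself. The heuristics you give for the three regimes (bounded shells for $n\ge4$, a harmonic tail producing $\log(1/h)$ at $n=3$, and the $h^{-\beta/2}$ saturation of $(1-t^{2})^{-1/2}$ at $n=2$) are correct and will yield the stated $\kappa(n)$; the only real work left is exactly what you flag, namely the careful treatment of the caps where $\langle\xi,\xi_j\rangle\to\pm1$ and the passage from the discrete sum to its continuous approximation.
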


\begin{proof}
We will use the results of Section \ref{sec:mc} to compute $\E\left[\norm{u}_{L^{2}(\gamma_{(x,\xi)})}\right]$ by computing the $\E\left[\norm{u}_{L^{2}(\gamma_{(x,\xi)})}^{2}\right]$. In this case we are using $\phi(\tau)=\tau^{2}$. Now
$$\E\left[\norm{u}_{L^{2}(\gamma_{(x,\xi)})}^{2}\right]=\E\left[\sum_{j,l}c_{j}c_{l}\int_{\gamma_{(x,\xi)}}e^{\frac{i}{h}\langle y,\xi_{j}-\xi_{l}\rangle}dy\right]$$
and since the Gaussian variables $c_{j}$ are mean $0$ and $\sigma^{2}=N^{-1}$,
$$\E\left[\norm{u}_{L^{2}(\gamma_{(x,\xi)})}^{2}\right]=1.$$
So we need only control the Lipschitz norm associated with $F_{\beta}(x,\xi)$. 
$$|F_{\beta}(x,\xi;c)-F_{\beta}(x,\xi;c)|\leq |\norm{u(c)}_{L^{2}(\gamma_{(x,\xi)})}-\norm{u_{d}}_{L^{2}(\gamma_{(x,\xi)})}|\leq\norm{u(c)-u(d)}_{L^{2}(\gamma_{(x,\xi)})}.$$
If we can estimate $\norm{u(c)-u(d)}_{L^{2}(\gamma_{(x,\xi)})}$ in terms of $\norm{u(c)-u(d)}_{L^{2}(B_{1}(0))}$ the almost orthogonality of the $e^{\frac{i}{h}\langle y,\xi_{j}\rangle}$ induced by the order $h$ spacing will yield a suitable Lipschitz constant. Since $u$ is a linear combination of plane waves with frequencies close to $\frac{1}{h}$ it makes sense to consider $u$ as an approximate solution (or quasimode) to
$$(h^{2}\Delta-1)u=0.$$
In particular $u$ is a quasimode of order $h^{\beta}$, that is
$$\norm{(h^{2}\Delta-1)u}_{L^{2}(B_{1}(0))}=O_{L^{2}}(h^{\beta}).$$
This characterisation allows us to use results on the $L^{p}$ growth of quasimodes on submanifolds \cite{Tacy10}. Note that, while we restrict our attention to $\1_{B_{1}(0)}u$ for exact computation, in cases (such as this) where we are only concerned with bounds we may instead work with $\chi(x)u(x)$ where $\chi$ is smooth, $\chi=1$ on $B_{1}(0)$ and is zero outside $B_{2}(0)$. The function $\chi(x)u(x)$ remains a quasimode of order $h^{\beta}$ for any $0\leq \beta\leq 1$. Similarly if we are only concerned with upper estimates we may extend the line segment $\gamma(x,\xi)$ so that the ends lie outside of the support of $\chi(x)u(x)$.

In the case where $\beta=1$ we can immediately use the submanifold estimates of \cite{Tacy10} (Theorem 1.7).  We can retrieve estimates for other values of $\beta$ by scaling. Let
$$u_{\beta}=u(h^{1-\beta}x)$$
and $Er[u]=(h^{2}\Delta-1)u$. Then
$$(h^{2\beta}\Delta-1)u_{\beta}=Er[u](h^{1-\beta}x).$$
Since $u$ is an $O_{L^{2}}(h^{\beta})$ quasimode of $(h^{2}\Delta-1)$ then $u_{\beta}$ is an $O_{L^{2}}(h^{\beta})$ quasimode of $(h^{2\beta}\Delta-1)$. So applying \cite{Tacy10} at the semiclassical scale $h^{\beta}$ we obtain
$$\norm{u_{\beta}}_{L^{2}(\gamma_{(x,\xi)})}\lesssim h^{-\beta\delta(n)}\norm{u_{\beta}}_{L^{2}}$$
\begin{equation}\delta(n)=\begin{cases}
\frac{1}{4}& n=2\\
\frac{n-1}{2}-\frac{1}{2}&n>3.\end{cases}\label{eq:deltan}\end{equation}
When $n=3$
$$\norm{u_{\beta}}_{L^{2}(\gamma_{(x,\xi)})}\lesssim h^{-\frac{\beta}{2}}\sqrt{\log(1/h)}\norm{u_{\beta}}_{L^2}$$
and so
\begin{equation}\norm{u}_{L^{2}(\gamma_{(x,\xi)})}\lesssim h^{-\beta\delta(n)+\frac{\beta-1}{2}}\norm{u}_{L^{2}}\label{eqn:gammaLip}\end{equation}
in dimensions other than three (it holds with a log loss in dimension three). When the $\xi_{j}$ actually lie on the sphere we can use \cite{CSogge14} to remove the log loss. 
Therefore the Lipschitz constant for $F_{\beta}(x,\xi)$ is $h^{-\beta\delta(n)+\frac{\beta-1}{2}}$ (with the log loss for $n=3$). Recall that $N$ grows as $h^{\beta-n}$. So  $L\leq N^{1/2-\kappa(n)}$,
\begin{equation}\kappa(n)=\begin{cases}
\frac{1}{2}-\frac{\beta}{4}&n=2\\
\frac{1}{2}&n>3\end{cases}\label{kappan}\end{equation}
and again with $\kappa(3)=1/2$ the same bounds hold with a log loss in dimension three. Therefore using Theorem \ref{thm:mcphi} we have that for fixed $(x,\xi)$ and $n=2$
$$\E\left[F_{\beta}(x,\xi)\right]=1+O\left(\frac{1}{N^{\frac{1}{2}-\frac{\beta}{4}}}\right),$$
for $n=3$
$$\E\left[F_{\beta}(x,\xi)\right]=1+O\left(\frac{\sqrt{\log N}}{N^{1/2}}\right),$$
for  $n\geq{}3$,
$$\E\left[F_{\beta}(x,\xi)\right]=1+O\left(\frac{1}{N^{1/2}}\right).$$
\end{proof}

\begin{thm}\label{thm:Xrayunif}
Let $F_{\beta}(x,\xi)$ be the random variable as given in Theorem \ref{thm:Xrayexp} and $\kappa(n)$ be given by \eqref{kappan}. For $m:\R^{+}\to\R^{+}$, $m(h)\to 0$ as $h\to 0$ and 
$$m(h)\geq{}N^{-\kappa(n)+\epsilon}=\begin{cases}
h^{\left(2-\beta\right)\left(\frac{1}{2}-\frac{\beta}{4}\right)-\epsilon(n-\beta)}&n=2\\
h^{\frac{n-\beta}{2}-\epsilon(n-\beta)}&n\geq{}3.\end{cases}$$
 define the  the exception set $S(m)$ by
$$S(m)=\{c: \exists (x,\xi) \text{ so that } |F_{\beta}(x,\xi;c)-1|\geq{}m(h)\}.$$
Let $\mu$ be the joint Gaussian measure given by \eqref{Gaussianmeasure}. Then the following estimate on the size of $S(m)$ holds.
\begin{equation}Pr(S(m))=\mu(S(m))\leq \exp\left(-N^{2\kappa(n)}m^{2}(h)\right).\label{eqn:exest}\end{equation}

\end{thm}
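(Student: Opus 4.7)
The plan is to run a net argument that promotes the pointwise concentration bound we already have (from the proof of Theorem \ref{thm:Xrayexp} via Corollary \ref{cor:expconcen}) to a uniform bound over $(x,\xi) \in B_1(0) \times \S^{n-1}$. For any fixed $(x,\xi)$, applying Corollary \ref{cor:expconcen} with $\phi(\tau)=\tau^2$, $q=2$, and the $c$-Lipschitz constant $L \leq N^{1/2-\kappa(n)}$ established in Theorem \ref{thm:Xrayexp} yields
$$\mu\bigl\{c : |F_\beta(x,\xi;c)-1| \geq t\bigr\} \leq \exp\bigl(-cN^{2\kappa(n)}t^2\bigr)\quad\text{for } t \gtrsim N^{-\kappa(n)}.$$
What remains is to control the variation of $F_\beta(x,\xi;c)$ in $(x,\xi)$ with enough care that only a polynomially sized net in $h$ is needed.

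For the Lipschitz bound in $(x,\xi)$, parametrize $\gamma(x,\xi) \ni y = x+s\xi$, $s\in[0,1]$, so that $F_\beta^2(x,\xi;c) = \int_0^1 |u(x+s\xi;c)|^2\,ds$. A direct estimate gives
$$\bigl|F_\beta^2(x_1,\xi_1;c)-F_\beta^2(x_2,\xi_2;c)\bigr| \leq 2\,\|u\|_{L^\infty}\|\nabla u\|_{L^\infty}\,\bigl|(x_1,\xi_1)-(x_2,\xi_2)\bigr|.$$
Differentiating the plane-wave expansion termwise yields $\|u\|_{L^\infty} \leq N^{1/2}\|c\|_{\ell^2}$ and $\|\nabla u\|_{L^\infty} \leq Ch^{-1}N^{1/2}\|c\|_{\ell^2}$, so on the event $E=\{\|c\|_{\ell^2}\leq 2\}$ (with $\mu(E^c)\leq e^{-cN}$ by a standard $\chi^2$ tail estimate) we have $\|u\|_{L^\infty}\|\nabla u\|_{L^\infty} \leq Ch^{-1}N$. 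Combining with the elementary inequality $|a-b|\leq \sqrt{|a^2-b^2|}$ for $a,b\geq 0$, we obtain on $E$
$$\bigl|F_\beta(x_1,\xi_1;c)-F_\beta(x_2,\xi_2;c)\bigr| \leq C\sqrt{h^{-1}N\,\bigl|(x_1,\xi_1)-(x_2,\xi_2)\bigr|}.$$

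Now pick an $\eta$-net $\mathcal{N}\subset B_1(0)\times\S^{n-1}$ of cardinality $|\mathcal{N}|\leq C\eta^{-(2n-1)}$ with $\eta = c\,m^2(h)\,h\,N^{-1}$, so that the Lipschitz bound above forces variation on net cells to be at most $m(h)/2$. A union bound over $\mathcal{N}$ combined with the pointwise concentration at threshold $m(h)/2$ gives
$$\mu(S(m)) \leq \mu(E^c) + |\mathcal{N}|\exp\bigl(-cN^{2\kappa(n)}m^2(h)/4\bigr) \leq e^{-cN} + C h^{-K} m^{-K'}\exp\bigl(-cN^{2\kappa(n)}m^2(h)/4\bigr)$$
for some exponents $K,K'$ depending only on $n$. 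The hypothesis $m(h)\geq N^{-\kappa(n)+\epsilon}$ forces $N^{2\kappa(n)}m^2(h) \geq N^{2\epsilon} = h^{-2\epsilon(n-\beta)}$, which grows super-polynomially in $h^{-1}$ and thus absorbs both the polynomial factor $h^{-K}m^{-K'}$ and the loss of the factor $1/4$ in the exponent, yielding \eqref{eqn:exest}. The $n=3$ case is handled identically modulo logarithmic losses in $\kappa(3)$.

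The main obstacle is the balancing act in the net step: the deterministic $(x,\xi)$-Lipschitz constant grows like $h^{-1}N = h^{-(1+n-\beta)}$, which forces a fairly fine net and produces a polynomial-in-$h^{-1}$ loss when we take a union bound. The slack $m(h) \geq N^{-\kappa(n)+\epsilon}$ is precisely what makes the exponent $N^{2\kappa(n)}m^2(h)$ super-polynomial in $h^{-1}$, so that this polynomial loss is absorbed; verifying this absorption carefully, and tracking the correct power of $N$ versus $h$, is the only genuinely delicate bookkeeping in the argument.
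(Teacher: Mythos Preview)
Your argument is correct and follows the same three–step template the paper uses: build a net in $(x,\xi)$, invoke the pointwise concentration from Corollary~\ref{cor:expconcen} at each net point, and union bound. The only real difference lies in how you obtain the $(x,\xi)$–continuity of $F_\beta$. The paper applies Cauchy--Schwarz along $\gamma$ and then the restriction estimates of \cite{Tacy10} to $u$ and to $u_{x_j}$ (both being $O(h^\beta)$ quasimodes) to get $|\partial_{(x,\xi)}F_\beta^2|\lesssim h^{-2\beta\delta(n)}$ and hence a grid spacing of size $h^{2\beta\delta(n)}N^{-\kappa(n)}$. You instead use the crude deterministic bounds $\|u\|_{L^\infty}\leq N^{1/2}\|c\|_{\ell^2}$ and $\|\nabla u\|_{L^\infty}\leq Ch^{-1}N^{1/2}\|c\|_{\ell^2}$, which forces a somewhat finer net but avoids invoking the restriction theorem a second time. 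Either way the net is polynomial in $h^{-1}$, and the hypothesis $m(h)\geq N^{-\kappa(n)+\epsilon}$ makes $N^{2\kappa(n)}m^2(h)\geq N^{2\epsilon}$, so $\exp(-cN^{2\kappa(n)}m^2(h))$ kills any polynomial prefactor; the bookkeeping is as you describe. One remark: your conditioning on $E=\{\|c\|_{\ell^2}\leq 2\}$ is a point the paper glosses over (its displayed bound on $\partial_{(x,\xi)}F_\beta^2$ implicitly carries a factor of $\|c\|_{\ell^2}^2$), so your version is in that respect cleaner. A small wording slip: $N^{2\epsilon}$ is polynomial in $h^{-1}$, not super-polynomial; what you mean is that the resulting exponential decay is super-polynomial, which is what matters.
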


\begin{proof}
We use similar argument to that used in \cite{HanTac20} and \cite{dCI20} to obtain uniform equidistribution statements on small balls. In those works uniform equidistribution results for a random variable $X=X(y;c)$ depending on a parameter $y$ are obtained via a three step process.
\begin{enumerate}
\item A finite grid of $y^{\nu}$ is produced so that $X(y;c)$ is approximated sufficiently by $X(y^{\nu};c)$. This reduces the problem about the size of $S(m)$ to the size of a finite union of sets (where $S(m)$ is contained in the union).
\item The size of the exception sets are estimated at each grid-point. Usually this is achieved via a concentration of measure argument so that the decay is exponential in $h^{-1}$.
\item The number of grid-points is estimated. Usually this number will grow only polynomially in $h^{-1}$. Since the growth rate of the grid-points is overwhelmed by the exponential decay in each term a good union bound can be obtained. 
\end{enumerate}

In this case we are considering $|F_{\beta}(x,\xi;c)|^{2}$ as a random variable depending on the parameter $(x,\xi)$. Therefore we will form a grid of $(x^{\nu},\xi^{\nu})$ and approximate $|F_{\beta}(x,\xi;c)|^{2}$ by $|F_{\beta}(x^{\nu},\xi^{\nu};c)|^{2}$. We have then reduced the problem about the size of $S(m)$ to the size of a finite union
$$\bigcup_{\nu}\{c: |F_{\beta}(x^{\nu},\xi^{\nu};c)-1|\geq{}m(h)/2\}.$$
We find that the number of grid points $(x^{\nu},\xi^{\nu})$ necessary to obtain a suitable estimate does indeed grow polynomially. On the other hand concentration of measure arguments will allow us to prove exponential decay on the size of each of the $\{c: |F_{\beta}(x^{\nu},\xi^{\nu};c)-1|\geq{}m(h)/2\}$, overwhelming the growth from the number of gridpoints. Therefore the union bound decays exponentially.

Suppose that $c\in S(m)$, then there is some $(x,\xi)$ for which either
$$F^{2}_{\beta}(x,\xi;c)\geq{}(1+m(h))^{2}\quad\text{or}\quad F^{2}_{\beta}(x,\xi;c)\leq (1-m(h))^{2}.$$ 
We approximate $F^{2}_{\beta}(x,\xi;c)$ by $F^{2}_{\beta}(x^{\nu},\xi^{\nu};c)$ via a Taylor series. 
Writing $F^{2}_{\beta}(x,\xi;c)$ parametrically we see that
$$\left|\frac{\partial}{\partial x_{j}}F^{2}_{\beta}(x,\xi;c)\right|\leq 2\int_{0}^{1} |u_{x_{j}}(x+s\xi)||u(x+s\xi)|ds$$
$$\left|\frac{\partial}{\partial \xi_{j}}F^{2}_{\beta}(x,\xi;c)\right|\leq 2\int_{0}^{1} |u_{x_{j}}(x+s\xi)||u(x+s\xi)|ds.$$
So applying Cauchy Schwartz
$$\left|\frac{\partial}{\partial x_{j}}F^{2}_{\beta}(x,\xi;c)\right|+\left|\frac{\partial}{\partial \xi_{j}}F^{2}_{\beta}(x,\xi;c)\right|\leq C\norm{u_{x_{j}}}_{L^{2}(\gamma(x,\xi))
}\norm{u}_{L^{2}(\gamma(x,\xi))}.$$
Since $u_{x_{j}}$ is also a $O(h^{\beta})$ quasimode of $(h^{2}\Delta-1)$ we have that
$$F^{2}_{\beta}(x,\xi;c)=F^{2}_{\beta}(x^{\nu},\xi^{\nu};c)+O\left(h^{-2\beta\delta(n)}|(x,\xi)-(x^{\nu},\xi^{\nu})|\right)$$
where $\delta(n)$ is given by \eqref{eq:deltan}. Therefore in dimension $n=2$ if we place our grid points with spacing $h^{\frac{\beta}{2}+\left(2-\beta\right)\left(\frac{1}{2}-\frac{\beta}{4}\right)}$ we can always write (for some $(x^{\nu},\xi^{\nu})$)
$$F^{2}_{\beta}(x,\xi;c)=F^{2}_{\beta}(x^{\nu},\xi^{\nu};c)+O(N^{-\kappa(n)})=F^{2}_{\beta}(x^{\nu},\xi^{\nu};c)+O\left(h^{\left(n-\beta\right)\left(\frac{1}{2}-\frac{\beta}{4}\right)}\right).$$
Similarly for $n\geq{}3$ we set the spacing at $h^{n-2+\frac{n-\beta}{2}}$ to have
$$F^{2}_{\beta}(x,\xi;c)=F^{2}_{\beta}(x^{\nu},\xi^{\nu};c)+O(N^{-\kappa(n)})=F^{2}_{\beta}(x^{\nu},\xi^{\nu};c)+O\left(h^{\frac{n-\beta}{2}}\right).$$
Recall that $m(h)\geq{}N^{-\kappa(n)+\epsilon}=h^{(n-\beta)(\kappa(n)-\epsilon)}$. So (at least for small enough $h$) in both cases there is a grid point $(x^{\nu},\xi^{\nu})$ where
$$|F_{\beta}(x^{\nu},\xi^{\nu};c)-1|\geq{}m(h)/2.$$
For $\phi(\tau)=\tau^{2}$, we already saw that $\phi^{-1}\left(E[\phi(F(x^{\nu},\xi^{\nu}))]\right)=1$ so we can now apply Corollary \ref{cor:expconcen} to obtain
\begin{equation}\mu\left(\{c: |F_{\beta}(x^{\nu},\xi^{\nu};c)-1|\geq{}m(h)/2\}\right)\leq \exp(-CN^{2\kappa}m^{2}(h)).\label{excep}\end{equation}
\end{proof}

Putting together \eqref{excep} with the results of Theorem \ref{thm:Xrayexp},  the probability that there there is any line $\gamma(x,\xi)$ over which equidistribution fails is so small that we would not expect to be able to see strong scars in numerical simulations. In the next section we consider the question of phase-space equidistribution. We obtain two distinct results. The first says that when we are far away from the Planck-scale equidistribution holds in the uniform sense. The more delicate result tells us that at the Planck scale phase-space equidistribution fails with high probability.

\section{Phase-space concentrations}\label{sec:phasespace}

We now turn our attention to the question of equidistribtion in phase space. That is if $P$ is a semiclassical localiser, is $\norm{Pu}_{L^{2}}$ equidistributed? We say that $P$ is a semiclassical localiser if 
$$P=Op(p(y,\eta;h))=p_{h}(y,hD)=\frac{1}{(2\pi h)^{n}}\int e^{\frac{i}{h}\langle y-z,\eta\rangle}p(y,\eta;h)u(z)dzd\eta$$
where $p(y,\eta;h)$ is compactly supported in $\R^{n}\times \R^{n}$. The support of $p(y,\eta;h)$ may depend on $h$ but is constrained by the uncertainty principle,
$$|\nabla_{x} p\cdot\nabla_{\eta}p|\leq h^{-1}.$$
For the purposes of this paper we will be assuming that $p(y,\eta;h)$ is supported near a point $(x,\xi)$. We will introduce two parameters $\alpha$ and $\mu$ to measure how near.
\begin{enumerate}
\item The $h^{\alpha}$ scale controls the level of angular frequency localisation. The radial frequency is localised so that $|\eta|\leq 4$. We can ignore contributions from frequencies $|\eta|\geq{}4$ since $u$ is a sum of plane waves with frequencies in $[1-h^{\beta},1+h^{\beta}]$, $\beta\geq{}0$. 
\item The parameter $\mu$ controls how close/far we are from Planck  scale. When $\mu=1$ we are exactly at Planck scale and therefore at the limits of the uncertainty principle. Large $\mu$ (growing in $h^{-1}$) correspond to more relaxed localisation. 
\end{enumerate}

Let $(x,\xi)\in \R^{n}\times \S^{n-1}$.  Because of the curvature we obtain linearisation when $\alpha=\beta/2$ so we are interested in the cases $\alpha\leq \beta/2$. Figure \ref{fig:supp} shows the support of $p^{\alpha,\mu}_{(x,\xi)}(y,\eta)$ in both configuration and Fourier space. 

\begin{figure}[h!]
\includegraphics[scale=1]{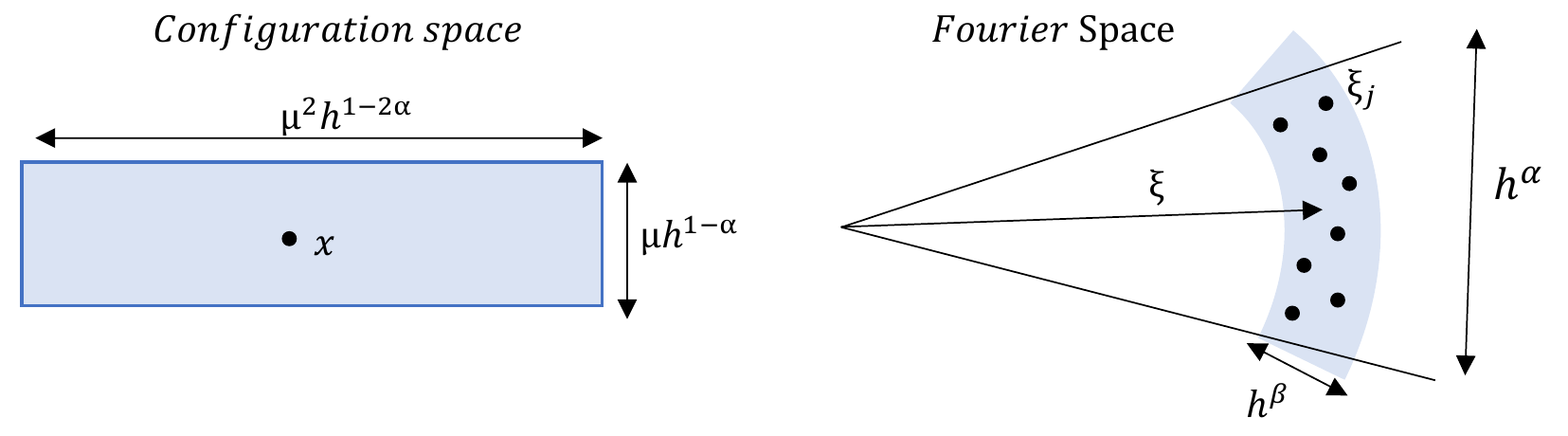}
\caption{The configuration an Fourier supports of the symbol $p^{\alpha,\mu}_{(x,\xi)}(y,\eta)$.}\label{fig:supp}
\end{figure}
Define
$$P_{(x,\xi)}^{\alpha,\mu}=p_{(x,\xi)}^{\alpha,\mu}(y,hD)=Op(p_{(x,\xi)}^{\alpha,\mu}(y,\eta))$$
where
\begin{multline}
p_{(x,\xi)}^{\alpha,\mu}(y,\eta)=Ah^{-\frac{n}{2}+\alpha}\mu^{-\frac{n+1}{2}}\chi\left(\mu^{-2}h^{-1+2\alpha}|\langle x-y,\xi\rangle|\right)\\
\times\chi\left(\mu^{-1}h^{-1+\alpha}|(x-y)-\langle x-y,\xi\rangle\xi|\right)\chi\left(\frac{|\eta|}{4}\right)\chi\left(h^{-\alpha}\left|\frac{\eta}{|\eta|}-\xi\right|\right).\label{psymdef}\end{multline}
The prefactor is chosen so that
$$A^{2}h^{1-\beta+(n-1)(1-\alpha)}\sum_{j}\chi^{2}\left(h^{-\alpha}\left|\xi_{j}-\xi\right|\right)\int \chi^{2}\left(|\langle y,\xi\rangle|\right)\chi^{2}\left(|y-\langle y,\xi\rangle|\right)dy=1.$$
The angular cut off will ensure that $A$ is of order one. Note that
$$\norm{P^{\alpha,\mu}_{(x,\xi)}u(\cdot)}_{L^{2}}^{2}=\langle P^{\alpha,\mu}_{(x,\xi)}u(\cdot),P^{\alpha,\mu}_{(x,\xi)}u(\cdot)\rangle=\langle u(\cdot),(P^{\alpha,\mu}_{(x,\xi)})^{\star}P^{\alpha,\mu}_{(x,\xi)}u(\cdot)\rangle.$$
So if the random wave is assumed to equidistribute down to small scale (in terms of semiclassical defect measures) then this normalisation ensures that 
$$\norm{P^{\alpha,\mu}_{(x,\xi)}u(\cdot)}_{L^{2}}^{2}= 1+O(\mu^{-1}).$$
 Consider the random variable $G_{\alpha,\beta,\mu}(x,\xi)$ given by
\begin{equation}
G_{\alpha,\beta,\mu}=\norm{P^{\alpha,\mu}_{(x,\xi)}u(\cdot)}_{L^{2}}.\label{eqn:Gdef}\end{equation}
If we can show that 
$$\E[\norm{G_{\alpha,\beta,\mu}(x,\xi)}_{L^{\infty}(B_{1}(0)\times \S^{n-1})}]\geq{}c\sqrt{\log(1/h)}$$
we establish that, apart from a set whose measure decays to zero in $h$, equidistribution fails at small scales. We will show this is true for small $\mu$, that is $\mu\leq C$. We will then see that for large $\mu$, $\mu\geq{}h^{-\epsilon}$ that in fact uniform equidistribution results hold. So it is only at Planck scale that we will see fluctuations.

As in Burq-Lebeau \cite{BL13} we will first control $\norm{G_{\alpha,\beta,\mu}}_{L^{\infty}}$ by $\norm{G_{\alpha,\beta,\mu}}_{L^{p_{h}}}$ for $p_{h}=\delta\log(h)$. To compute $\norm{G_{\alpha,\beta,\mu}}_{L^{p_{h}}}$ we again use Theorem \ref{thm:mcphi}. In this case we set $\phi(\tau)=\tau^{p_{h}}$ and compute $\norm{G_{\alpha,\beta,\mu}}_{L^{p_{h}}}^{p_{h}}$. To facilitate that calculation we will assume that $p_{h}$ is an even integer (to allow us to expand the expression for $G_{\alpha,\beta,\mu}$).

\begin{prop}\label{prop:infcont}
Suppose $G_{\alpha,\beta,\mu}(x,\xi)$ is given by \eqref{eqn:Gdef}. Then for $p_{h}=\delta\log(1/h)$ (for some $\delta>0$), there are constants $C_{1},C_{2}$ so that,
\begin{equation}C_{1}\norm{G_{\alpha,\beta,\mu}}_{L^{p_{h}}(B_{1}(0)\times S^{n-1})}\leq \norm{G_{\alpha,\beta,\mu}}_{L^{\infty}(B_{1}(0)\times S^{n-1})}\leq C_{2} \norm{G_{\alpha,\beta,\mu}}_{L^{p_{h}}(B_{1}(0)\times S^{n-1})}.\label{infinitycontrol}\end{equation}
\end{prop}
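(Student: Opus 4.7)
The right-hand inequality $\|G_{\alpha,\beta,\mu}\|_{L^{p_h}}\leq C_{2}\|G_{\alpha,\beta,\mu}\|_{L^{\infty}}$ is immediate from H\"older on the bounded set $B_1(0)\times\S^{n-1}$: we have $\|G_{\alpha,\beta,\mu}\|_{L^{p_h}}\leq |B_1(0)\times\S^{n-1}|^{1/p_h}\|G_{\alpha,\beta,\mu}\|_{L^{\infty}}$, and $|B_1(0)\times\S^{n-1}|^{1/p_h}\to 1$ as $h\to 0$, so the prefactor stays bounded. All the work is in the left-hand inequality. For that I plan to use a covering/reverse-H\"older argument analogous to the one used for Theorem \ref{thm:Xrayunif}, except now the point is that the logarithmic choice $p_h = \delta\log(1/h)$ converts a polynomial-in-$h^{-1}$ volume factor into an $h$-independent constant; this is the Bernstein/Burq--Lebeau trick.

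The first step is a uniform-in-$c$ derivative bound on $G_{\alpha,\beta,\mu}(x,\xi;c)=\|P^{\alpha,\mu}_{(x,\xi)}u\|_{L^2}$ viewed as a function of $(x,\xi)\in B_1(0)\times\S^{n-1}$. Since $|\nabla_{(x,\xi)}\|P_{(x,\xi)}^{\alpha,\mu}u\|_{L^2}|\leq \|(\nabla_{(x,\xi)}P_{(x,\xi)}^{\alpha,\mu})u\|_{L^2}$, differentiating the symbol \eqref{psymdef} and tracking the worst scale coming from each of the cutoffs $\chi(\mu^{-2}h^{-1+2\alpha}|\cdot|)$, $\chi(\mu^{-1}h^{-1+\alpha}|\cdot|)$ and $\chi(h^{-\alpha}|\cdot|)$ together with $L^2\to L^2$ boundedness of the resulting \SDO{} yields
\[|\nabla_{(x,\xi)}G_{\alpha,\beta,\mu}(x,\xi;c)|\leq C h^{-K}\|u\|_{L^2}\]
for an explicit $K=K(\alpha,n)$. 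I would then cover $B_1(0)\times\S^{n-1}$ by balls $B_\nu$ of radius $r=h^M$ with $M>K$, giving $\sim h^{-M(2n-1)}$ balls on each of which $|G_{\alpha,\beta,\mu}(x,\xi;c)-G_{\alpha,\beta,\mu}(x^\nu,\xi^\nu;c)|\leq Ch^{M-K}\|u\|_{L^2}$.

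Raising to the $p_h$-th power and integrating on $B_\nu$ (and using $(a+b)^{p}\leq 2^{p-1}(a^p+b^p)$) gives
\[|G_{\alpha,\beta,\mu}(x^\nu,\xi^\nu;c)|\leq 2|B_\nu|^{-1/p_h}\|G_{\alpha,\beta,\mu}\|_{L^{p_h}(B_\nu)}+2Ch^{M-K}\|u\|_{L^2},\]
and taking $\max_\nu$ together with the ball-to-point bound $\|G\|_{L^{\infty}}\leq \max_\nu|G(x^\nu,\xi^\nu)|+Ch^{M-K}\|u\|_{L^2}$ produces
\[\|G_{\alpha,\beta,\mu}\|_{L^{\infty}}\leq 2|B_\nu|^{-1/p_h}\|G_{\alpha,\beta,\mu}\|_{L^{p_h}}+3Ch^{M-K}\|u\|_{L^2}.\]
The crucial observation is that $|B_\nu|^{-1/p_h}=c\,h^{-M(2n-1)/(\delta\log(1/h))}=c\,e^{M(2n-1)/\delta}$ is a constant \emph{independent} of $h$, precisely because $p_h=\delta\log(1/h)$.

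To close the argument the additive error $h^{M-K}\|u\|_{L^2}$ must be absorbed into $\|G_{\alpha,\beta,\mu}\|_{L^{p_h}}$. Here I would invoke the resolution-of-identity property built into the normalization $A$, namely
\[\int_{B_1(0)\times\S^{n-1}}\|P^{\alpha,\mu}_{(x,\xi)}u\|_{L^2}^2\,dxd\xi\;\gtrsim\; \|u\|_{L^2}^2,\]
which combined with reverse H\"older on the bounded domain gives $\|G_{\alpha,\beta,\mu}\|_{L^{p_h}}\gtrsim \|G_{\alpha,\beta,\mu}\|_{L^2}\gtrsim\|u\|_{L^2}$, so that the additive error is a factor $h^{M-K}$ smaller than the main term and can be absorbed by choosing $M$ large. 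The main technical obstacle is the derivative bound: the symbol \eqref{psymdef} carries three distinct length scales ($\mu^{-2}h^{-1+2\alpha}$, $\mu^{-1}h^{-1+\alpha}$, $h^{-\alpha}$), and a secondary difficulty is extracting the polynomial (rather than exponential) dependence on $h^{-1}$ in the $L^2\to L^2$ norm of $\nabla_{(x,\xi)}P^{\alpha,\mu}_{(x,\xi)}$ uniformly in $\mu$; the rest of the scheme (grid, Minkowski, Bernstein trick, error absorption) is essentially bookkeeping.
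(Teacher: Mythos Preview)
Your argument is essentially correct and is morally the same Bernstein/Burq--Lebeau mechanism the paper uses, but you have made it harder for yourself in one place and introduced a step the paper does not need.

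First a cosmetic point: you have the two halves of \eqref{infinitycontrol} swapped. The H\"older direction is the \emph{left} inequality $C_1\|G\|_{L^{p_h}}\le \|G\|_{L^\infty}$; the work is in the \emph{right} inequality $\|G\|_{L^\infty}\le C_2\|G\|_{L^{p_h}}$. Your mathematics is aimed at the correct (hard) direction, only the labels are off.

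On substance: the paper does not run a grid-plus-error-absorption argument. Instead it observes that $G_{\alpha,\beta,\mu}^2(x,\xi)$, viewed as a function on the $(2n-1)$-dimensional space $B_1(0)\times\S^{n-1}$, is semiclassically localised at scale $h$: from \eqref{psymdef} one has $|D_x^{\gamma_1}D_\xi^{\gamma_2}p^{\alpha,\mu}_{(x,\xi)}|\le C_{\gamma}h^{-|\gamma_1|-|\gamma_2|}$, so the $(x,\xi)$-Fourier transform of $G^2$ decays rapidly outside $\{|\omega|\lesssim 1/h\}$. Semiclassical Sobolev (equivalently, Bernstein) then gives the \emph{homogeneous} inequality
\[
\|G^2_{\alpha,\beta,\mu}\|_{L^\infty}\le C\,h^{-(2n-1)/(p_h/2)}\,\|G^2_{\alpha,\beta,\mu}\|_{L^{p_h/2}} = C\,e^{(4n-2)/\delta}\,\|G^2_{\alpha,\beta,\mu}\|_{L^{p_h/2}},
\]
with no additive remainder. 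Your covering argument is really a proof of this Bernstein inequality by hand, but because you bound $|\nabla_{(x,\xi)}G|$ by $h^{-K}\|u\|_{L^2}$ rather than by $h^{-K}\|G\|_{L^\infty}$ you pick up the additive error $h^{M-K}\|u\|_{L^2}$ and then need the resolution-of-identity lower bound $\int_{B_1\times\S^{n-1}}\|P^{\alpha,\mu}_{(x,\xi)}u\|_{L^2}^2\,dx\,d\xi\gtrsim\|u\|_{L^2}^2$ to absorb it. That lower bound is plausible for $u$ built from $\Lambda_\beta$, but it is \emph{not} a consequence of the normalisation of $A$ in \eqref{psymdef} (that normalisation is rigged so that $\E[G^2]=1$, which is a different statement), and would need its own proof. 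The paper's route avoids this detour entirely by using the homogeneous form of Bernstein.
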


\begin{proof}
Since $B_{1}(0)\times \S^{n-1}$ is compact, the first half of \eqref{infinitycontrol} follows from H\"{o}lder. To get the second half we will in fact control the $L^{\infty}$ norm of $G^{2}_{\alpha,\beta,\mu}$ in terms of its $L^{p_{h}/2}$ norm. The requisite control comes directly from the regularity properties of the symbol $p^{\alpha,\mu}_{(x,\xi)}(y,\eta)$. The semiclassical Sobolev estimates tell   us that if $W:\R^{d}\to\R$ is semiclassically localised,
$$W=\chi(w,hD)W+O_{L^{2}}(h)$$
for some $\chi(w,\omega)$ with compact support then
$$\norm{W}_{L^{\infty}}\leq Ch^{-\frac{d}{p}}\norm{W}_{L^{2}}.$$
Saying that $W$ is semiclassically localised is the same as saying that $W$ is localised in space and that (up to Schwartz type error) its Fourier transform is localised in the ball of radius $1/h$. Notice that 
$$|D^{\gamma_{1}}_{x}D^{\gamma_{2}}_{\xi}p^{\alpha,\mu}_{(x,\xi)}|\leq Ch^{-(|\gamma_{1}|+|\gamma_{2}|)}.$$
Therefore indeed outside the ball of radius $1/h$ the Fourier transform of $p^{\alpha,\mu}_{(x,\xi)}$ decays in a Schwartz fashion and we can apply the semiclassical Sobolev estimates. In this case $d=2n-1$ and if we set $p_{h}=\delta\log(1/h)$ we obtain
$$\norm{G^{2}_{\alpha,\beta,\mu}}_{L^{\infty}}\leq Ce^{\frac{4n-2}{c}}\norm{G^{2}_{\alpha,\beta,\mu}}_{L^{p_{h}/2}}.$$

\end{proof}

\begin{thm}\label{thm:concenphase}
Let $G_{\alpha,\beta,\mu}(x,\xi)$ be given by \eqref{eqn:Gdef}. For $\mu\leq C$ and small enough (but independent of $h$)  there exist constants $C_{1},C_{2}$ so that
\begin{equation}C_{1}\sqrt{\log(1/h)}\leq \E\left[\norm{G_{\alpha,\beta,\mu}}_{L^{\infty}(B_{1}(0)\times \S^{n-1})}\right]\leq C_{2}\sqrt{\log(1/h)}.\label{eqn:Pexpectinfty}\end{equation}

\end{thm}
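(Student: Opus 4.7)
The plan has three main steps. First, invoke Proposition \ref{prop:infcont} to reduce the $L^{\infty}$ problem to a high-moment problem: with $p_{h}=\delta\log(1/h)$ chosen to be an even integer,
$$\norm{G_{\alpha,\beta,\mu}}_{L^{\infty}(B_{1}(0)\times\S^{n-1})}\asymp\norm{G_{\alpha,\beta,\mu}}_{L^{p_{h}}(B_{1}(0)\times\S^{n-1})},$$
so it suffices to bound $\E\bigl[\norm{G_{\alpha,\beta,\mu}}_{L^{p_{h}}}\bigr]$ from both sides by constant multiples of $\sqrt{\log(1/h)}$. I then apply Theorem \ref{thm:mcphi} with $\phi(\tau)=\tau^{p_{h}}$ to convert the expectation of the $L^{p_{h}}$ norm into the $p_{h}$-th root of $\E\bigl[\norm{G_{\alpha,\beta,\mu}}_{L^{p_{h}}}^{p_{h}}\bigr]$. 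To invoke that theorem I must first verify a Lipschitz bound $L\leq N^{1/2-\kappa}$ for some $\kappa>0$ on the map $c\mapsto\norm{G_{\alpha,\beta,\mu}(\cdot,\cdot;c)}_{L^{p_{h}}}$; this follows from the reverse triangle inequality, almost-orthogonality of the plane waves on $B_{2}(0)$, and the $L^{2}$-boundedness built into the normalisation of $p^{\alpha,\mu}_{(x,\xi)}$. The resulting Jensen-gap error $\sim N^{-\kappa}\sqrt{p_{h}}$ is then negligible compared to the target $\sqrt{\log(1/h)}$.

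Second, I compute the moment explicitly. By Fubini,
$$\E\bigl[\norm{G_{\alpha,\beta,\mu}}_{L^{p_{h}}}^{p_{h}}\bigr]=\int_{B_{1}(0)\times\S^{n-1}}\E\bigl[(c^{T}Bc)^{p_{h}/2}\bigr]\,dx\,d\xi,$$
where $B_{jk}(x,\xi)=\langle P^{\alpha,\mu}_{(x,\xi)}e_{j},P^{\alpha,\mu}_{(x,\xi)}e_{k}\rangle_{L^{2}}$ and $e_{j}=e^{i\langle\cdot,\xi_{j}\rangle/h}$. Because $c=(c_{1},\ldots,c_{N})$ is Gaussian with covariance $N^{-1}I$, diagonalising $B/N$ with eigenvalues $\{\lambda_{i}\}$ gives $c^{T}Bc\stackrel{d}{=}\sum_{i}\lambda_{i}Z_{i}^{2}$ with $Z_{i}$ iid standard normals. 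The normalisation in \eqref{psymdef} is rigged precisely so that $\sum\lambda_{i}=N^{-1}\text{Tr}(B)=1+O(\mu^{-1})$, and via the multinomial expansion together with $\E[Z^{2k}]=(2k-1)!!$, the moment $\E\bigl[(c^{T}Bc)^{p_{h}/2}\bigr]$ is entirely controlled by how the spectrum of $B/N$ distributes its mass --- in particular by the top eigenvalue $\lambda_{1}$.

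Third, I invoke the trace bounds of Lemma \ref{lem:tracebnds} to pin down $\lambda_{1}$. The upper estimate \eqref{Tracebnds}, $N^{-m}\text{Tr}(B^{m})\leq C^{m}$, forces $\lambda_{1}\leq C$, which via a Bernstein-type bound for Gaussian chaos yields $\E\bigl[(c^{T}Bc)^{p_{h}/2}\bigr]\lesssim C^{p_{h}/2}(p_{h}-1)!!\lesssim(Cp_{h}/e)^{p_{h}/2}$; integrating over the compact set $B_{1}(0)\times\S^{n-1}$ costs only a factor $\Vol^{1/p_{h}}=1+o(1)$, and after taking $p_{h}$-th roots one obtains the upper estimate $\E\bigl[\norm{G_{\alpha,\beta,\mu}}_{L^{\infty}}\bigr]\leq C_{2}\sqrt{\log(1/h)}$. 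For the lower bound I use the lower trace estimate \eqref{Tracelower}, valid only when $\mu\leq C$: there $N^{-m}\text{Tr}(B^{m})\geq c^{m}$ forces $\lambda_{1}\geq c>0$, so retaining only the dominant term in the multinomial expansion gives $\E\bigl[(c^{T}Bc)^{p_{h}/2}\bigr]\geq c^{p_{h}/2}(p_{h}-1)!!\gtrsim(cp_{h}/e)^{p_{h}/2}$ on a subset of $B_{1}(0)\times\S^{n-1}$ of positive measure uniform in $h$, and the $p_{h}$-th root delivers the matching lower bound. The principal obstacle is precisely this lower trace bound: once $\mu\gtrsim 1$, $B$ spreads its mass across roughly $\mu^{n+1}$ eigenvalues of size $\mu^{-(n+1)}$ each, and $N^{-m}\text{Tr}(B^{m})$ decays geometrically in $\mu$, collapsing the moment estimate and destroying the logarithmic enhancement --- which is exactly why the theorem is a strictly Planck-scale phenomenon.
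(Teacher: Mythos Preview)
Your three-step plan is the paper's proof: reduce to $L^{p_h}$ via Proposition~\ref{prop:infcont}, diagonalise the quadratic form and compute the $p_h$-th moment by multinomial expansion, then close with Theorem~\ref{thm:mcphi}. The trace input is also the same --- note that \eqref{Tracelower} is the \emph{conclusion} $\lambda_{\max}\geq c\,\Trace(A_{\alpha,\beta,\mu})$, obtained by comparing \eqref{Tracebnds} with \eqref{Tracesqbnd} (so $(\sum\lambda_i)^2/\sum\lambda_i^2\leq C$ when $\mu\leq C$), not a higher-trace hypothesis as you phrase it; the upper bound needs only $\lambda_{\max}\leq\Trace(A_{\alpha,\beta,\mu})\leq a_2$.

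There is, however, one place where your account is too optimistic. The Lipschitz constant for $c\mapsto\|G_{\alpha,\beta,\mu}(\cdot,\cdot;c)\|_{L^{p_h}}$ is \emph{not} $O(1)$: almost-orthogonality of the plane waves controls the $L^2(B_1(0)\times\S^{n-1})$ norm of $U-V$, but the $L^{p_h}$ norm requires interpolating against the $L^\infty$ bound $\sim N^{1/2}\mu^{-(n+1)/4}$ (this is Lemma~\ref{lem:Lipbnds}), giving $L\sim N^{1/2-1/p_h}$ and hence $\kappa=1/p_h$, which tends to zero as $h\to 0$. The Jensen-gap error from Theorem~\ref{thm:mcphi} is then
\[
N^{-\kappa}\sqrt{p_h}=N^{-1/p_h}\sqrt{p_h}=e^{-(n-\beta)/\delta}\sqrt{\delta\log(1/h)},
\]
which is the \emph{same order} as the main term $\sqrt{p_h}$, not lower order. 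It becomes harmless only by taking $\delta$ small enough that $e^{-(n-\beta)/\delta}$ is beaten by the constants coming from the moment computation; without this tuning the lower half of \eqref{eqn:Pexpectinfty} does not follow. Your word ``negligible'' and the phrase ``for some $\kappa>0$'' suggest you have a uniform-in-$h$ Lipschitz exponent in mind, which is not available here.
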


\begin{proof}
From the results of Proposition \ref{prop:infcont} it is enough to find upper and lower bounds for $\E\left[\norm{G_{\alpha,\beta,\mu}}_{L^{p_{h}}}\right]$ for $p_{h}\approx\delta\log(1/h)$. Therefore let  $p_{h}$ be an even integer obeying $\frac{\delta}{2}\log(1/h)\leq p_{h}\leq \delta\log(1/h)$.
Let $\phi(\tau)=\tau^{p_{h}}$, we will compute
$$\phi^{-1}\left(\E\left[\phi\left(\norm{G_{\alpha,\beta,\mu}}_{L^{p_{h}}(B_{1}(0)\times \S^{n-1})}\right)\right]\right)$$
and apply the measure concentration results of Theorem \ref{thm:mcphi}.

Since we have assumed $p_{h}$ is even we can write
$$|G_{\alpha,\beta,\mu}(x,\xi)|^{p_{h}}=\left(|G_{\alpha,\beta,\mu}(x,\xi)|^{2}\right)^{p_{h}/2}.$$
Since
$$u=\sum_{\lambda_{j}\in\Lambda_{\beta}}c_{j}e^{\frac{i}{h}\langle y,\xi_{j}\rangle}$$
$G^{2}_{\alpha,\beta,\mu}(x,\xi)$ is quadratic in the column vector $c=[c_{1},\dots,c_{N}]^{T}$. We will,  similar to \cite{dCI20}, scale and perform transformations so that $G^{2}_{\alpha,\beta,\mu}(x,\xi)$ can be written as $y^{T}Dy$ where $D$ is a diagonal matrix and $y^{T}$ is a standard $N$-dimensional Gaussian variable (with variance one). 

Let $w=\sigma c=N^{-1/2}c$ we can write
$|G_{\alpha,\beta,\mu}(x,\xi)|^{2}$ as 
$$|G_{\alpha,\beta,\mu}(x,\xi)|^{2}=w^{T}\left[A_{\alpha,\beta,\mu}\right]\,w$$
where $A_{\alpha,\beta,\mu}$ is the matrix with entries 
\begin{equation}(A_{\alpha,\beta,\mu})_{j,k}=N^{-1}\int_{C_{1/2}}\psi_{j}(y)\overline{\psi}_{k}(y)dy\label{matA}\end{equation}
with
$$\psi_{j}(y)=p^{\alpha,\mu}(y,hD)e^{\frac{i}{h}\langle y, \xi_{j}\rangle }.$$
If we diagonalise $A_{\alpha,\beta,\mu}$ to write $A_{\alpha,\beta,\mu}=U^{T}DU$ where $U$ is unitary and $D$ is diagonal with entries $\lambda_{j}$  we can write
$$|G_{\alpha,\beta,\mu}(x,\xi)|^{2}=y^{T}Dy=\sum_{j}\lambda_{j}y_{j}^{2}$$
where  $y=Uw$. Since $U$ is unitary $y$ is also an $N$-dimensional Gaussian random variable with variance one. 
 Recall that
$$|G_{\alpha,\beta,\mu}(x,\xi)|^{2}=\norm{p^{\alpha,\mu}_{(x,\xi)}(y,hD)u}_{L^{2}}^{2}$$
so $A_{\alpha,\beta,\mu}$ must have only non-negative eigenvalues. Let $M$ be a large integer (eventually we will want $M\sim \log(1/h)$ so that $p_{h}=2M$). Then
\begin{align*}
|G_{\alpha,\beta,\mu}(x,\xi)|^{2M}&=\left(\sum_{j}\lambda_{j}y_{j}^{2}\right)^{M}\\
&=\sum_{[j_{1},\dots,j_{M}]}\prod_{k=1}^{M}\lambda_{j_{k}}y_{j_{k}}^{2}.\end{align*}
Let $J=[j_{1},\dots,j_{M}]$ and let $|J|_{d}$ be the number of distinct $j_{i}$ in $J$.
Then
$$|G_{\alpha,\beta,\mu}(x,\xi)|^{M}=\sum_{r=0}^{R}\sum_{2^{r-1}\leq|J|_{d}\leq 2^{r}}\prod_{k=1}^{M}\lambda_{j_{k}}y_{j_{k}}^{2}$$
with $2^{R}=M$. Since the $y_{j_{k}}^{2}$ are independent Gaussian variables we can compute
$$\E\left[\prod_{k=1}^{M}y_{j_{k}}^{2}\right]=\left(\frac{2}{\sqrt{\pi}}\right)^{M}\Gamma(p_{1}+1/2)\Gamma(p_{2}+1/2)\cdots \Gamma(p_{|J|_{d}}+1/2)$$
where the $p_{i}$ are the number of distinct occurrences in $J$. Obviously $p_{1}+p_{2}+\cdots+p_{|J|_{d}}=M$. So
$$\E\left[|G_{\alpha,\beta,\mu}(x,\xi)|^{M}\right]=\sum_{r=0}^{R}\sum_{2^{r-1}<|J|_{d}\leq{}2^{r}}\prod_{k=1}^{|J|_{d}}\lambda_{j_{k}}^{p_{k}}\Gamma(p_{k}+1/2).$$
We obtain uniform upper and lower bounds for $\E\left[|G_{\alpha,\beta,\mu}(x,\xi)|^{M}\right]$ which, alongside the compact nature of $B_{1}(0)\times \S^{n-1}$ carry over to give upper and lower bounds on $\E\left[\norm{G_{\alpha,\beta,\mu}(x,\xi)}_{L^{p_{h}}}^{p_{h}}\right]$. 
Since all the eigenvalues $\lambda_{j}$ are non-negative we can obtain lower bounds by obtain a lower bound for any fixed $r$. We claim that if $\mu\leq C$ (the Planck scale case) then there is some $\j$ so that
\begin{equation}\lambda_{\j}>c\text{Tr}(A_{\alpha,\beta,\mu}).\label{Tracelower}\end{equation}
In which case the term associated with $J=[\j,\j,\dots,\j]$ has the lower bound
$$\prod_{k=1}^{|J|_{d}}\lambda_{j_{k}}^{p_{k}}\Gamma(p_{k}+1/2)>c^{N}(\text{Tr}(A_{\alpha,\beta}))^{M}\Gamma(M+1/2)>C^{M}(\text{Tr}(A_{\alpha,\beta}))^{M}M^{M}.$$
To justify this claim we (in Lemma \ref{lem:tracebnds}) compute and compare $\Trace(A_{\alpha,\beta,\mu})$ and $\Trace(A^2_{\alpha,\beta,\mu})$. For $\mu\leq C$ we find that,
$$\frac{\left(\sum_{i}\lambda_{i}\right)^{2}}{\sum_{i}\lambda_{i}^{2}}\leq{}C$$
and therefore we can conclude \eqref{Tracelower}. On the other hand
$$\left(\frac{2}{\sqrt{\pi}}\right)^{M}\Gamma(p_{1}+1/2)\Gamma(p_{2}+1/2)\cdots \Gamma(p_{|J|_{d}}+1/2)\leq C^{M}M^{M}$$
for any combination of $p_{i}$ so
$$\E\left[|G_{\alpha,\beta,\mu}(x,\xi)|^{M}\right]\leq C^{M}M^{M}\left(\sum_{j}\lambda_{j}\right)^{M}=C^{M}M^{M}(\text{Tr}(A_{\alpha,\beta,\mu}))^{M}.$$
In Lemma \ref{lem:tracebnds} we see that there are constants $a_{1}$ and $a_{2}$ such that
$$a_{1}\leq \text{Tr}(A_{\alpha,\beta,\mu})\leq a_{2}$$
so if $p_{h}=2M$ we have that
$$C_{1}p_{h}^{1/2}\leq \left(\E\left[\norm{G_{\alpha,\beta,\mu}(x,\xi)}_{L^{p_{h}}(B_{1}(0)\times \S^{n-1})}^{p_{h}}\right]\right)^{\frac{1}{p_{h}}}\leq C_{2}p_{h}^{1/2}.$$

Now all we need is the concentration of measure that allows us to relate $\phi^{-1}\left(\E\left[\phi(\norm{G_{\alpha,\beta,\mu}}_{L^{p_{h}}})\right]\right)$ to $\E\left[\norm{G_{\alpha,\beta,\mu}}_{L^{p_{h}}}\right]$. In Lemma \ref{lem:Lipbnds} we find that for $\mu\leq C$ and 
$$G_{p}=\norm{G_{\alpha,\beta,\mu}}_{L^{p}(B_{1}(0)\times \S^{n-1})}$$
$$|G_{p}(c)-G_{p}(d)|\leq N^{\frac{1}{2}-\frac{1}{p}}\norm{c-d}_{\ell^{2}}.$$
So applying the results of Theorem \ref{thm:mcphi}
$$\E\left[\norm{G_{\alpha,\beta,\mu}}_{L^{p_{h}}(B_{1}(0)\times \S^{n-1})}\right]=\left(\E\left[\norm{G_{\alpha,\beta,\mu}}_{L^{p_{h}}(B_{1}(0)\times \S^{n-1})}^{p_{h}}\right]\right)^{1/p_{h}}+O\left(\frac{p_{h}^{\frac{1}{2}}}{N^{\frac{1}{p_{h}}}}\right).$$
Since $\frac{\delta}{2}\log(1/h)\leq p_{h}\leq \delta\log(1/h)$, 
$$\E\left[\norm{G_{\alpha,\beta,\mu}}_{L^{p_{h}}(B_{1}(0)\times \S^{n-1})}\right]=\left(\E\left[\norm{G_{\alpha,\beta,\mu}}_{L^{p_{h}}(B_{1}(0)\times \S^{n-1})}^{p_{h}}\right]\right)^{1/p_{h}}+O\left(p_{h}^{\frac{1}{2}}e^{-\frac{n-\beta}{\delta}}\right).$$
Therefore by making $\delta$ small enough we can obtain both upper and lower bounds, that is
$$C_{1}\sqrt{\log(1/h)}\leq \E\left[\norm{G_{\alpha,\beta,\mu}}_{L^{\infty}(B_{1}(0)\times \S^{n-1})}\right]\leq C_{2}\sqrt{\log(1/h)}.$$
\end{proof}

We now consider the case where we are far from Planck scale. That is where $\mu>h^{-\epsilon}$. In this case we find that the uniform version of equidistribution holds. 

\begin{thm}\label{thm:equiphase}
Let $G_{\alpha,\beta,\mu}(x,\xi)$ be given by \eqref{eqn:Gdef} and suppose $\mu\geq h^{-\epsilon}$, then
\begin{equation}
\E\left[G_{\alpha,\beta,\mu}(x,\xi)\right]=1+O\left(h^{\frac{\epsilon}{2}}\right).\label{eqn:expectinfty}\end{equation}
Further if $\delta>0$ and $m(h)\geq{}\max\left(h^{\epsilon-\delta}, h^{\frac{2\epsilon(n+1)}{4(n-\beta)}-\delta}\right)$,
\begin{equation}Pr\{c:\exists(x,\xi),\text{ so that } \left|G_{\alpha,\beta,\mu}(x,\xi)-1\right|>m(h)\}\leq \exp(-N^{\frac{2\epsilon(n+1)}{4(n-\beta)}}m^{2}(h)).\label{eqn:Guniform}\end{equation}

\end{thm}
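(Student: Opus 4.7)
The approach is to treat Theorem~\ref{thm:equiphase} as the large-$\mu$ analogue of Theorem~\ref{thm:Xrayunif}: the expectation \eqref{eqn:expectinfty} follows from a second-moment calculation combined with Theorem~\ref{thm:mcphi}, while the uniform tail bound \eqref{eqn:Guniform} follows the three-step grid strategy used for the X-ray transform. The key simplification in the $\mu\geq h^{-\epsilon}$ regime is that the phase-space localiser $P^{\alpha,\mu}_{(x,\xi)}$ averages over a region wide enough that $\E[G^2_{\alpha,\beta,\mu}]$ concentrates at $1$ rather than exhibiting the logarithmic enhancements seen at Planck scale.

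For \eqref{eqn:expectinfty}, I expand $u=\sum_j c_j e^{\frac{i}{h}\langle\cdot,\xi_j\rangle}$ and use independence and $\sigma^2=1/N$ of the $c_j$ to write
$$\E\left[G^2_{\alpha,\beta,\mu}(x,\xi)\right]=\sigma^2\sum_j\norm{P^{\alpha,\mu}_{(x,\xi)}e^{\frac{i}{h}\langle\cdot,\xi_j\rangle}}_{L^2}^2=\Trace(A_{\alpha,\beta,\mu}).$$
By Lemma~\ref{lem:tracebnds} the normalisation constant $A$ in \eqref{psymdef} is chosen so that $\Trace(A_{\alpha,\beta,\mu})=1+O(\mu^{-1})$, which for $\mu\geq h^{-\epsilon}$ gives $\E[G^2]=1+O(h^\epsilon)$. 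Applying Theorem~\ref{thm:mcphi} with $\phi(\tau)=\tau^2$ together with the Lipschitz bound on $G_{\alpha,\beta,\mu}$ in $c$ (obtained from operator boundedness of $P^{\alpha,\mu}$ and the almost orthogonality of the $h$-separated plane waves in $L^2(B_1(0))$) then yields $\E[G_{\alpha,\beta,\mu}]=\sqrt{1+O(h^\epsilon)}+O(N^{-\kappa})=1+O(h^{\epsilon/2})$.

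For \eqref{eqn:Guniform}, I follow the three-step grid strategy of Theorem~\ref{thm:Xrayunif}. First, I place a grid of base points $(x^\nu,\xi^\nu)\in B_1(0)\times\S^{n-1}$ spaced so that $|G^2(x,\xi;c)-G^2(x^\nu,\xi^\nu;c)|\leq m(h)/4$ at all nearby $(x,\xi)$. Control of $\partial_x G^2$ and $\partial_\xi G^2$ comes from differentiating the symbol \eqref{psymdef}: each $x$-derivative contributes at worst $\mu^{-1}h^{-1+\alpha}$ and each $\xi$-derivative contributes $h^{-\alpha}$, so Cauchy--Schwarz applied to $\langle\partial P^{\alpha,\mu}u,P^{\alpha,\mu}u\rangle$ fixes the required spacing; the resulting grid has polynomial-in-$h^{-1}$ cardinality. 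Second, I apply Corollary~\ref{cor:expconcen} at each grid point; the Lipschitz constant of $G$ in $c$ scales like $L\leq N^{1/2-\kappa}$ with $\kappa=\epsilon(n+1)/(4(n-\beta))$, giving a per-point tail bound $\exp(-CN^{2\kappa}m^2(h))$. Third, the union bound over the polynomial grid is overwhelmed by the exponential decay provided $m(h)\geq N^{-\kappa+\delta}$, which is exactly the hypothesis on $m(h)$.

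The main obstacle is the second step: extracting the correct power of $N$ for the coefficient-Lipschitz of $G_{\alpha,\beta,\mu}$. A direct Calder\'{o}n--Vaillancourt bound on $\norm{P^{\alpha,\mu}_{(x,\xi)}}_{L^2\to L^2}$ together with orthogonality of the $\{e^{\frac{i}{h}\langle\cdot,\xi_j\rangle}\}$ loses too much; one must instead exploit the block structure implicit in the matrix $A_{\alpha,\beta,\mu}$ of \eqref{matA} and observe that the angular cutoff $\chi(h^{-\alpha}|\eta/|\eta|-\xi|)$ selects only $O(h^{-(n-1)\alpha})$ of the $N$ plane waves, so that the coefficient-Lipschitz scales sharply with the combined parameters $\alpha$, $\mu$ and $\beta$. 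Once this sharp $\kappa$ is identified, the per-point concentration and the union bound close without further loss.
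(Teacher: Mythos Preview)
Your proposal is correct in structure and matches the paper's approach: compute $\E[G^2]$ directly, invoke Theorem~\ref{thm:mcphi} with $\phi(\tau)=\tau^2$ for \eqref{eqn:expectinfty}, then run the three-step grid argument with Corollary~\ref{cor:expconcen} for \eqref{eqn:Guniform}. Two points of execution differ from the paper and are worth noting.

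First, your appeal to Lemma~\ref{lem:tracebnds} for $\Trace(A_{\alpha,\beta,\mu})=1+O(\mu^{-1})$ is not quite right: that lemma only gives two-sided constant bounds $a_1\leq\Trace(A_{\alpha,\beta,\mu})\leq a_2$, not the precise leading coefficient. The paper instead uses the pseudodifferential expansion $P^{\alpha,\mu}_{(x,\xi)}e^{\frac{i}{h}\langle y,\xi_j\rangle}=p^{\alpha,\mu}_{(x,\xi)}(y,\xi_j)e^{\frac{i}{h}\langle y,\xi_j\rangle}+h^{\epsilon}r(y,\xi_j)e^{\frac{i}{h}\langle y,\xi_j\rangle}$, valid because $\mu\geq h^{-\epsilon}$ makes the symbol slowly varying at scale $h$; the principal term then equals $1$ exactly by the normalisation of $A$, and the remainder contributes $O(h^{\epsilon})$.

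Second, for the coefficient-Lipschitz bound the paper does not argue via an angular block count as you suggest, but invokes Lemma~\ref{lem:Lipbnds}: one writes $\norm{P^{\alpha,\mu}_{(x,\xi)}(u-v)}_{L^2}^2=(c-d)^T I(c-d)$ with $I_{j,m}=\int\psi_j\overline\psi_m$, bounds the operator norm by the Hilbert--Schmidt norm, and uses the estimate $\sum_{j,m}|I_{j,m}|^2\leq C\mu^{-n-1}N^2$ (i.e.\ the $\Trace(A^2)$ bound of Lemma~\ref{lem:tracebnds}) to obtain $L\leq C\mu^{-(n+1)/4}N^{1/2}$, hence $\kappa=\frac{\epsilon(n+1)}{4(n-\beta)}$ as you state. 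Your block-structure heuristic would arrive at the same exponent, but the $\Trace(A^2)$ route is what is actually available in the paper.
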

\begin{proof}
We will follow much the same process as we did in Theorems \ref{thm:Xrayexp} and \ref{thm:Xrayunif} where we controlled the behaviour of $F(x,\xi)$. That is we will set $\phi(\tau)=\tau^{2}$ and use Theorem \ref{thm:mcphi} to compute $E\left[G_{\alpha,\beta,\mu}(x,\xi)\right]$ in terms of $\E\left[G^{2}_{\alpha,\beta,\mu}(x,\xi)\right]$. Note that
$$G^{2}_{\alpha,\beta,\mu}(x,\xi)=\sum_{j,l}c_{j}c_{l}\langle P^{\alpha,\mu}_{(x,\xi)}e^{\frac{i}{h}\langle y,\xi_{j}\rangle}, P^{\alpha,\mu}_{(x,\xi)}e^{\frac{i}{h}\langle y,\xi_{l}\rangle}\rangle.$$
Since the Gaussian variables have mean zero all non-diagonal terms fall out when we compute expectation so we only need compute
$$\sum_{j}\langle P^{\alpha,\mu}_{(x,\xi)}e^{\frac{i}{h}\langle y,\xi_{j}\rangle}, P^{\alpha,\mu}_{(x,\xi)}e^{\frac{i}{h}\langle y,\xi_{j}\rangle}\rangle.$$
Since $\mu\geq{}h^{-\epsilon}$ we can write
$$P^{\alpha,\mu}_{(x,\xi)}e^{\frac{i}{h}\langle y,\xi_{j}\rangle}=p^{\alpha,\mu}_{(x,\xi)}(y,\xi_{j})e^{\frac{i}{h}\langle y,\xi_{j}\rangle}+h^{\epsilon}r(y,\xi_{j})e^{\frac{i}{h}\langle y,\xi\rangle}$$
where $r(y,\xi_{j})$ has the same support an normalisation properties as $p^{\alpha,\mu}_{(x,\xi)}$. The contribution from the top terms is
\begin{multline*}h^{2\alpha-\beta}\mu^{-(n+1)}A^{2}\sum_{j}\chi^{2}\left(h^{-\alpha}\left|\frac{\xi_{j}}{|\xi_{j}|}-\xi\right|\right)\int \chi^{2}\left(\mu^{-2}h^{-1+2\alpha}|\langle x-y,\xi\rangle|\right)\\
\times\chi^{2}\left(\mu^{-1}h^{-1+\alpha}|(x-y)-\langle x-y,\xi\rangle|\right)dy.\end{multline*}
Rescaling and translating/rotating so that $x=0$ and $\xi=e_{1}$ we see that the normalisation on $A$ ensures that this term is one. Since $r(y,\xi_{j})$ has the same normalisation properties as $p^{\alpha,\mu}(y,\xi_{j})$ we can conclude that
$$\sum_{j}\langle P^{\alpha,\mu}_{(x,\xi)}e^{\frac{i}{h}\langle y,\xi_{j}\rangle}, P^{\alpha,\mu}_{(x,\xi)}e^{\frac{i}{h}\langle y,\xi_{j}\rangle}\rangle=1+O(h^{\epsilon}).$$
Therefore 
$$\E\left[G^{2}_{\alpha,\beta,\mu}(x,\xi)\right]=1+O(h^{\epsilon}).$$
Now we can use the measure concentration to find $\E\left[G_{\alpha,\beta,\mu}(x,\xi)\right]$. In Lemma \ref{lem:Lipbnds} we find that if
$$G_{\infty}=\norm{G_{\alpha,\beta,\mu}(\cdot,\cdot)}_{L^{\infty}(B_{1}(0)\times \S^{n-1})}$$
then
$$|G_{\infty}(c)-G_{\infty}(d)|\leq N^{1/2}\mu^{-\left(\frac{n+1}{4}\right)}\leq N^{1/2}h^{\epsilon\left(\frac{n+1}{4}\right)}=N^{\frac{1}{2}-\kappa}$$
for \begin{equation}\kappa = \frac{\epsilon}{n-\beta}\left(\frac{n+1}{4}\right).\label{kappanG}\end{equation} So by Theorem \ref{thm:mcphi}
$$\E\left[G_{\alpha,\beta,\mu}(x,\xi)\right]=1+O(h^{\epsilon}+N^{-\kappa})=1+O\left(h^{\epsilon}+h^{\frac{2\epsilon(n+1)}{4(n-\beta)}}\right).$$
Note that the cut offs we used to define $p^{\alpha,\mu}_{(x,\xi)}$ have power (in $h^{-1}$) type regularity. So by the same arguments that we used in the proof of uniform equidistribution of $F(x,\xi)$ we can find a polynomial grid $(x^{\nu},\xi^{\nu})$ so that failure to equidistribute at some point $(x,\xi)\in \R^{n}\times \S^{n-1}$ implies a failure to equidistribute at a grid point $(x^{\nu},\xi^{\nu})$.   Then applying the concentration of measure at these points we obtain
$$Pr\{c: |G_{\alpha,\beta,\mu}(x^{\nu},\xi^{\nu})-1|\geq{}m(h)\}\leq \exp(-N^{2\kappa}m^{2}(h)).$$
Since the number of grid points only growth polynomially in $h^{-1}$ (compared to the exponential decay of measure) we obtain \eqref{eqn:Guniform}.

\end{proof}

\section{Technical Lemmata}

This section is devoted to the proofs of the two technical Lemmata used a number of times throughout the paper.

The first lemma controls the growth of $\Trace(A_{\alpha,\beta,\mu})$ and $\Trace(A_{\alpha,\beta,\mu}^{2})$ dependent on $\mu$ (the parameter that controls how far we are from the Planck scale).

\begin{lemma}\label{lem:tracebnds}
Suppose $A_{\alpha,\beta,\mu}$ is given by \eqref{matA}. Then there exist constants $a_{1},a_{2},a_{3},a_{4}>0$ so that

\begin{equation} a_{1}\leq \Trace(A_{\alpha,\beta,\mu})\leq a_{2}\label{Tracebnds}\end{equation}
and
\begin{equation}a_{3}\mu^{-n-1}\leq{}\Trace(A_{\alpha,\beta,\mu}^{2})\leq a_{4}\mu^{-n-1}.\label{Tracesqbnd}\end{equation}
\end{lemma}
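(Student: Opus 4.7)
The plan is to prove both bounds by a direct computation that exploits the fact that a semiclassical pseudodifferential operator acts on a plane wave by pointwise multiplication by its symbol: $P^{\alpha,\mu}_{(x,\xi)} e^{\frac{i}{h}\langle y,\xi_j\rangle} = p^{\alpha,\mu}_{(x,\xi)}(y,\xi_j)\, e^{\frac{i}{h}\langle y,\xi_j\rangle}$. This reduces each entry of $A_{\alpha,\beta,\mu}$ to an integral of the product of the explicit cutoffs from \eqref{psymdef}.

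For \eqref{Tracebnds}, I first write $\Trace(A_{\alpha,\beta,\mu}) = N^{-1}\sum_j \int |p^{\alpha,\mu}_{(x,\xi)}(y,\xi_j)|^2\,dy$ and rescale the configuration variable via $y - x = \mu^2 h^{1-2\alpha} s_1 \xi + \mu h^{1-\alpha} s_\perp$ (with $s_\perp \perp \xi$), so that the spatial cutoffs reduce to $\chi^2(|s_1|)\chi^2(|s_\perp|)$ with Jacobian $\mu^{n+1} h^{n-(n+1)\alpha}$. Combined with the $A^2 h^{-n+2\alpha}\mu^{-(n+1)}$ prefactor of $|p|^2$, the $\mu$-powers cancel and I obtain $\int|p(y,\xi_j)|^2\,dy = A^2 C_0 h^{-(n-1)\alpha} \chi_j^2$, where $C_0 = \int\chi^2(|s_1|)\chi^2(|s_\perp|)\,ds > 0$ and $\chi_j = \chi(h^{-\alpha}|\xi_j/|\xi_j|-\xi|)$. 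The number of $\xi_j \in \Lambda_\beta$ with $\chi_j \neq 0$ is $\sim h^{(n-1)\alpha + \beta - n}$ by the $h$-separation of $\Lambda_\beta$, and multiplying by $N^{-1} \sim h^{n-\beta}$ cancels the remaining powers of $h$. The normalisation condition on $A$ stated just below \eqref{psymdef} then traps $\Trace(A_{\alpha,\beta,\mu}) \sim A^2 C_0$ between two positive constants, giving \eqref{Tracebnds}.

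For \eqref{Tracesqbnd}, expand $\Trace(A_{\alpha,\beta,\mu}^2) = N^{-2}\sum_{j,k}|\langle\psi_j,\psi_k\rangle|^2$. The same change of variables converts the oscillatory inner product into a Fourier transform of the cutoff profile,
\[
\langle\psi_j,\psi_k\rangle = A^2 h^{-(n-1)\alpha}\chi_j\chi_k\, e^{\frac{i}{h}\langle x,\xi_j-\xi_k\rangle}\,\hat{g}(\tau_{jk}),
\]
where $g(s) = \chi^2(|s_1|)\chi^2(|s_\perp|)$ (so $\hat g$ is Schwartz with $\hat g(0) = C_0 > 0$) and $\tau_{jk} = (\mu^2 h^{-2\alpha}(\xi_j-\xi_k)_1,\,\mu h^{-\alpha}(\xi_j-\xi_k)_\perp)$. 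In the regime $\mu \leq C$ with $\alpha \leq \beta/2$, the admissible scaled differences $\tau_{jk}$, coming from $\xi_k \in \Lambda_\beta$ in the angular support of $\chi_k$, stay inside a bounded neighbourhood of $0$ where $|\hat g|^2$ is comparable to $C_0^2$. Hence the inner sum over $k$ evaluates to $C_0^2$ times the cap count $\sim h^{(n-1)\alpha + \beta - n}$; the outer sum over $j$ and the $N^{-2}$ prefactor then yield $\Trace(A_{\alpha,\beta,\mu}^2) \asymp A^4 C_0^2$, which is bounded between two positive constants. Since $\mu \leq C$ likewise keeps $\mu^{-(n+1)}$ bounded between two positive constants, \eqref{Tracesqbnd} follows.

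The main obstacle is the lower bound in \eqref{Tracesqbnd}: the pointwise Schwartz decay of $\hat g$ directly delivers only the upper bound, so I need a uniform positive lower bound on $|\hat g(\tau_{jk})|^2$ over the admissible set of $\tau_{jk}$. This reduces to the positivity $\hat g(0) = \int \chi^2\chi^2\,ds > 0$ together with continuity of $\hat g$: a fixed ($h$-independent) neighbourhood of the origin carries $|\hat g|^2 \geq c C_0^2$, and the geometric constraint $\mu \leq C$, $\alpha \leq \beta/2$ keeps all $\tau_{jk}$ inside it.
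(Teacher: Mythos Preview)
Your reduction of the matrix entries via the action on plane waves, the rescaling that yields $\Trace(A_{\alpha,\beta,\mu})\asymp 1$, and the representation of $\langle\psi_j,\psi_k\rangle$ as a rescaled Fourier transform $\hat g(\tau_{jk})$ of the spatial cutoff are exactly the paper's computation. The gap is that for \eqref{Tracesqbnd} you argue only under the additional hypothesis $\mu\le C$: you claim every admissible $\tau_{jk}$ then lies in the positivity neighbourhood of $\hat g$, conclude $\Trace(A_{\alpha,\beta,\mu}^2)\asymp 1$, and finish by noting that $\mu^{-(n+1)}\asymp 1$ too. This does not establish the genuine $\mu$-dependence in \eqref{Tracesqbnd}; it only shows that both sides happen to be of unit size when $\mu$ is bounded.

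The lemma, however, is stated and proved in the paper for general $\mu$, and the large-$\mu$ upper bound is needed downstream: Lemma~\ref{lem:Lipbnds} quotes $\sum_{j,m}|I_{j,m}|^2\le C\mu^{-(n+1)}N^2$ from this proof to obtain the Lipschitz factor $\mu^{-(n+1)/4}$, which is then invoked in Theorem~\ref{thm:equiphase} with $\mu\ge h^{-\epsilon}$. The paper extracts the $\mu^{-(n+1)}$ by integration by parts: whenever $|(\xi_j-\xi_m)_1|>2^{l}\mu^{-2}h^{2\alpha}$ or $|(\xi_j-\xi_m)_\perp|>2^{l}\mu^{-1}h^{\alpha}$ one gains $2^{-lL}$ for any $L$, so the sum is dominated by pairs satisfying the near-diagonal conditions; for fixed $j$ there are $\sim h^{\beta-n}h^{\alpha(n-1)}\mu^{-(n+1)}$ such $m$, and it is this count that produces the factor $\mu^{-(n+1)}$. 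In your language the missing step is summing $|\hat g(\tau_{jk})|^2$ over the $h$-lattice of $\xi_k$ in the cap and recognising that only a $\mu^{-(n+1)}$-fraction of those lattice points land where $\hat g$ is not negligible. You allude to this (``Schwartz decay of $\hat g$ directly delivers the upper bound'') but never carry out the count, so as written your argument does not prove the lemma as stated.
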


\begin{proof}

From \eqref{matA} we have that,
$$\Trace(A_{\alpha,\beta,\mu})=h^{n-\beta}\sum_{j}\int |\psi_{j}(y)|^{2}dy$$
and
\begin{align*}\Trace(A_{\alpha,\beta,\mu}^{2})&=h^{2(n-\beta)}\sum_{j,m}\iint \psi_{j}(y)\overline{\psi}_{j}(y')\overline{\psi_{m}}(y)\psi_{m}(y')dydy'\\
&=h^{2(n-\beta)}\sum_{j,m}|I_{j,m}|^{2}\end{align*}
where
$$I_{j,m}=\int\psi_{j}(y)\overline\psi_{m}(y)dy.$$
Recall that
\begin{align*}
\psi_{j}(y)&=p^{\alpha,\mu}_{(x,\xi)}(y,hD)e^{\frac{i}{h} \langle y, \zeta_{j}\rangle}\\
&=h^{-\frac{n}{2}+\alpha}\mu^{-\frac{n+1}{2}}w^{\alpha,\mu}_{(x,\xi)}(y)q^{\alpha}_{\xi}(hD)e^{\frac{i}{h} \langle y, \zeta_{j}\rangle}\end{align*}
where
$$w^{\alpha,\mu}_{(x,\xi)}(y)=\chi\left(\mu^{-2}h^{-1+2\alpha}|\langle x-y,\xi\rangle|\right)\chi\left(\mu^{-1}h^{1-\alpha}|(x-y)-\langle x-y,\xi\rangle\xi|\right)$$
and
$$q^{\alpha}_{\xi}(\eta)=\chi\left(h^{-\alpha}\left|\frac{\eta}{|\eta|}-\xi\right|\right)\chi\left(\frac{|\eta|}{4}\right).$$
Therefore we can write
\begin{align*}
\psi_{j}(y)&=h^{-\frac{n}{2}+\alpha}\mu^{-\frac{n+1}{2}}w^{\alpha,\mu}_{(x,\xi)}(y)\FT_{h}^{-1}\left(q^{\alpha}_{\xi}(\cdot)\FT_{h}[e^{\frac{i}{h}\langle \cdot,\zeta_{j}\rangle}]\right)\\
&=h^{-\frac{n}{2}+\alpha}\mu^{-\frac{n+1}{2}}e^{\frac{i}{h}\langle y,\xi_{j}\rangle}w^{\alpha,\mu}_{(x,\xi)}(y)\chi\left(h^{-\alpha}\left|\frac{\xi_{j}}{|\xi_{j}|}-\xi\right|\right).\end{align*}
So we compute
\begin{align*}
I_{j,m}(x,\xi)&=\int \psi_{j}(y)\psi_{m}(y)dy\\
&=h^{-n+2\alpha}\mu^{-n-1}\chi\left(h^{-\alpha}\left|\frac{\xi_{j}}{|\xi_{j}|}-\xi\right|\right)\chi\left(h^{-\alpha}\left|\frac{\xi_{m}}{|\xi_{m}|}-\xi\right|\right)\int e^{\frac{i}{h}\langle y,\xi_{j}-\xi_{m}\rangle}(w^{\alpha,\mu}_{(x,\xi)}(y))^{2}dy.\end{align*}
First note that $I_{j,m}(x,\xi)=0$ if either 
$$\left|\frac{\xi_{j}}{|\xi_{j}|}-\xi\right|\geq{}2h^{\alpha}\quad\text{or}\quad\left|\frac{\xi_{m}}{|\xi_{m}|}-\xi\right|>2h^{\alpha}.$$
Immediately this tells us that there can only be $Ch^{\alpha(n-1)+\beta-n}$ of each where $I_{j,m}$ is nonzero. Conversely there is a smaller constant $a$ so that there are more than $ah^{\alpha(n-1)+\beta-n}$ points $\xi_{j}$ for which 
$$\chi\left(h^{-\alpha}\left|\frac{\xi_{j}}{|\xi_{j}|}-\xi\right|\right)=1.$$
If $\xi_{j}=\xi_{k}$ we have
$$I_{j,j}(x,\xi)=h^{-n+2\alpha}\mu^{-n-1}\chi^{2}\left(h^{-\alpha}\left|\frac{\xi_{j}}{|\xi_{j}|}-\xi\right|\right)\int(w^{\alpha,\mu}_{x,\xi}(y))^{2}dy.$$
Therefore there are constants $a_{1}$ and $a_{2}$ so that
$$a_{1}h^{-\alpha(n-1)}\chi^{2}\left(h^{-\alpha}\left|\frac{\xi_{j}}{|\xi_{j}|}-\xi\right|\right)\leq I_{j,j}(x,\xi)\leq a_{2}h^{-\alpha(n-1)}\chi^{2}\left(h^{-\alpha}\left|\frac{\xi_{j}}{|\xi_{j}|}-\xi\right|\right).$$
Since there are of order $h^{\beta-n}h^{\alpha(n-1)}$ $j$  that appear  in the sum we obtain constants (and here we abuse notation by allowing constants to vary line by line)
$$a_{1}\leq{}\Trace(A_{\alpha,\beta})\leq{}a_{2}.$$
Now let's turn our attention to $\Trace(A^{2}_{\alpha,\beta,\mu})$. The lower bounds on the diagonal terms only give that
$$a_{1}h^{-\alpha(n-1)+n-\beta}\leq{}\Trace(A^{2}_{\alpha,\mu}).$$
Therefore we need to use the off diagonal terms. What matters here is, how far apart do $\xi_{j}$ and $\xi_{m}$ have to be for
$$\int e^{\frac{i}{h}\langle y,\xi_{j}-\xi_{m}\rangle}(w^{\alpha,\mu}_{(x,\xi)}(y))^{2}dy$$
to be small? The additional smallness (apart from that which comes entirely from the support) is due to the non-stationary phase integral. If $\xi_{j}-\xi_{m}$ is large enough so that the oscillation of $e^{\frac{i}{h}\langle y,\xi_{j}-\xi_{m}\rangle}$ overwhelms the regularity of $w^{\alpha,\mu}_{(x,\xi)}(y)$ the contribution from this pair will be small. By rotations and translation it is enough to consider the case when $x=0$ and $\xi=e_{1}$. In that case $w^{\alpha,\mu}_{(0,e_{1})}$ is supported in a $h^{1-2\alpha}\times (h^{1-\alpha})^{n-1}$ tube (the long direction lies along the $e_{1}$ axis. First let's consider the Planck scale case (that is $\mu=1$). Let $\eta_{j,m}=\xi_{j}-\xi_{m}$. In this coordinate system if
\begin{equation}\begin{cases}
|(\eta_{j,m})_{1}|\leq\epsilon h^{2\alpha}\\
|(\eta_{j,m})'|\leq{}\epsilon h^{\alpha}\end{cases}\label{etacond}\end{equation}
the factor
$$e^{\frac{i}{h}\langle y,\eta_{j,m}\rangle}$$
does not complete a full oscillation over the support of $w^{\alpha,1}_{(x,\xi)}$. Therefore in these cases there are constants so that 
\begin{multline*}
a_{1}h^{-\alpha(n-1)}\chi\left(h^{-\alpha}\left|\frac{\xi_{j}}{|\xi_{j}|}-\xi\right|\right)\left(h^{-\alpha}\left|\frac{\xi_{m}}{|\xi_{m}|}-\xi\right|\right)\leq |I_{j,m}|\\
\leq a_{2}h^{-\alpha(n-1)}\chi\left(h^{-\alpha}\left|\frac{\xi_{j}}{|\xi_{j}|}-\xi\right|\right)\left(h^{-\alpha}\left|\frac{\xi_{m}}{|\xi_{m}|}-\xi\right|\right).\end{multline*}
Suppose that $j$ is one of the $\epsilon^{n-1}h^{\alpha(n-1)+\gamma-n}$ points such that 
$$\left|\frac{\xi_{j}}{|\xi_{j}|}-e_{1}\right|\leq \epsilon h^{\alpha}$$
and similarly $m$ is one of the points such that
$$\left|\frac{\xi_{m}}{|\xi_{m}|}-\xi\right|\leq{}\epsilon h^{\alpha}$$
that is both $\xi_{j}$ and $\xi_{m}$ lie in the intersection between the cone with angle $\frac{\epsilon h^{\alpha}}{2}$ from the $e_{1}$ axis and the annulus $[1-h^{\beta},1+h^{\beta}]$ (see Figure \ref{fig:coneann}).
\begin{figure}[h!]
\includegraphics[scale=0.4]{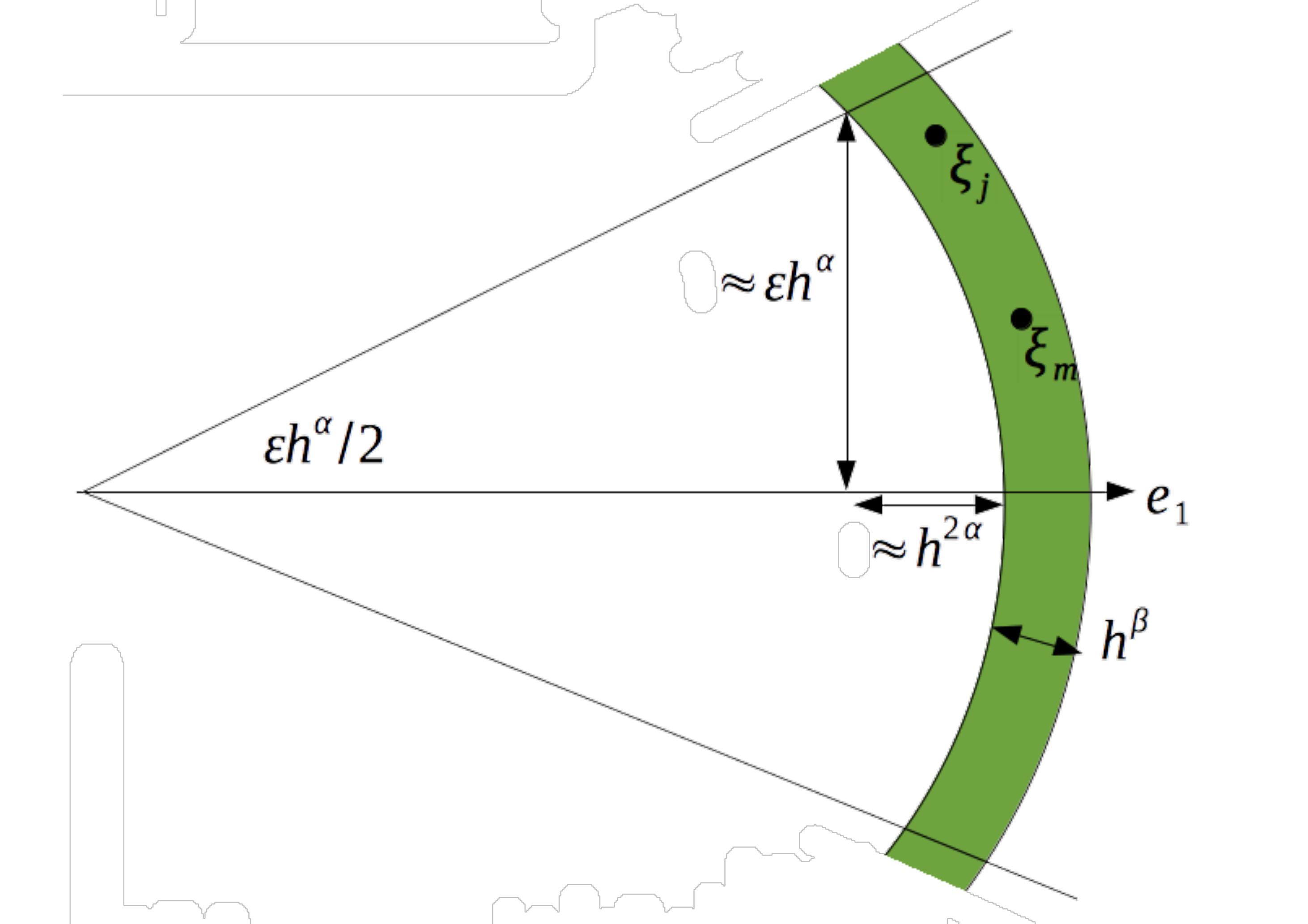}\label{fig:coneann}
\caption{If $\xi_{j}$ and $\xi_{m}$ are too close to each other the oscillations of $e^{\frac{i}{h}\langle y,\xi_{j}-\xi_{m}\rangle}$ are not enough to overwhelm the regularity of the symbol}
\end{figure}

 Therefore indeed, since $\alpha\leq{}\beta/2$, $\eta_{j,m}$ satisfies the conditions \ref{etacond}. So there are at least $\epsilon^{2(n-1)}h^{2\alpha(n-1)+2\gamma-2n}$ points for which
$$|I_{j,m}|>a_{\epsilon}h^{-\alpha(n-1)}$$
which yields
 $$\Trace(A_{\alpha,\beta,1})\geq{}a_{\epsilon}.$$
 In this case the upper bound follows directly from the maximum number of $(\xi_{j},\xi_{m})$ so the integrand of $I_{j,m}$ is nonzero.
 
 Now consider what happens as we move away from Planck scale $\mu\gg 1$. In this case the factor $e^{\frac{i}{h}\langle y,\eta_{j,m}\rangle}$ does not significantly oscillate if
 \begin{equation}\begin{cases}
 |(e_{j,m})_{1}|\leq \mu^{-2}h^{2\alpha}\\
 |(\eta_{j,m})'|\leq{}\mu^{-1}h^{\alpha}\end{cases}\label{etacondmu}
 \end{equation}
 but otherwise we are able to obtain some extra decay. Suppose first that 
 $$|(\eta_{j,m})_{1}|>2^{l}\mu^{-2}h^{\alpha}$$
 then integrating by parts in $y_{1}$ we find that
 $$\int e^{\frac{i}{h}\langle y,\eta_{j,m}\rangle}(w^{\alpha,\mu}(y))^{2}dy=\int\frac{he^{\frac{i}{h}\langle y,\eta_{j,m}\rangle}}{(\eta_{j,m})_{1}}\mu^{-2}h^{-1+2\alpha}\tilde{w}^{\alpha,\mu}(y)dy$$
 where $\tilde{w}^{\alpha,\mu}(y)$ has the same support and regularity properties as $w^{\alpha,\mu}$. Repeated applications of this argument show that for any $L$
\begin{equation}|I_{j,m}|\leq{}2^{-lL}h^{\alpha(n-1)}.\label{offdiag}\end{equation}
 If $|(\eta_{j,m})'|>\mu^{-1}h^{\alpha}$ the same argument using integration by parts in $y'$ variables gives \eqref{offdiag}. By picking $L$ large enough we can ensure that the major contribution to the sum $\sum_{j,m}|I_{j,m}|^{2}$ comes when \eqref{etacondmu} are satisfied. For any fixed $j$ there are $O(h^{\beta-n}h^{\alpha(n-1)}\mu^{-n-1}$) suitable $m$. Therefore we arrive at the estimate that
 $$a_{3}\mu^{-n-1}\leq \Trace(A^{2}_{\alpha,\beta,\mu})\leq{}a_{4}\mu^{-n-1}.$$

\end{proof}

The second lemma obtains Lipschitz bounds on $G=\norm{G_{\alpha,\beta,\mu}}_{L^{p}(B_{1}(0)\times \S^{n-1})}$. These are key to using the measure concentration arguments. 

\begin{lemma}\label{lem:Lipbnds}
Suppose $G_{p}=\norm{G_{\alpha,\beta,\mu}}_{L^{p}(B_{1}(0)\times \S^{n-1})}$ the following Lipschitz bound holds,
\begin{equation}|G_{p}(c)-G_{p}(d)|\leq N^{\frac{1}{2}-\frac{1}{p}}\mu^{-\frac{n+1}{4}+\frac{n+1}{2p}}\norm{c-d}_{\ell^{2}}=\mu^{-\frac{n+1}{4}+\frac{n+1}{2p}}h^{\frac{\beta-n}{2}\left(1-\frac{1}{p}\right)}\norm{c-d}_{\ell^{2}}.\label{Glip}\end{equation}

\end{lemma}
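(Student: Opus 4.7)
The plan is to reduce the bound to an $L^p$ estimate on $f_e(x,\xi):=\|P^{\alpha,\mu}_{(x,\xi)}v\|_{L^2(\R^n)}$, where $v=\sum_j e_j e^{\frac{i}{h}\langle y,\xi_j\rangle}$ and $e=c-d$. Two applications of the reverse triangle inequality (first for the outer $L^p(B_1(0)\times\S^{n-1})$ norm, then pointwise for the inner $L^2(\R^n)$ norm) give $|G_p(c)-G_p(d)|\leq\|f_e\|_{L^p}$. The target bound evaluates to $\|e\|_{\ell^2}$ at $p=2$ and to $N^{1/2}\mu^{-(n+1)/4}\|e\|_{\ell^2}$ at $p=\infty$, so the natural strategy is to establish both endpoints and interpolate via Lyapunov's inequality $\|f_e\|_{L^p}\leq\|f_e\|_{L^2}^{2/p}\|f_e\|_{L^\infty}^{1-2/p}$ on the compact set $B_1(0)\times\S^{n-1}$.

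For the $L^\infty$ endpoint I write $f_e^2(x,\xi)=e^{\star}(NA_{\alpha,\beta,\mu}(x,\xi))e$ using the matrix $A_{\alpha,\beta,\mu}$ from \eqref{matA}. Positive semi-definiteness gives $\lambda_{\max}(NA_{\alpha,\beta,\mu})\leq N\sqrt{\Trace(A_{\alpha,\beta,\mu}^2)}$, and Lemma \ref{lem:tracebnds} then yields $\lambda_{\max}\leq CN\mu^{-(n+1)/2}$, so $f_e(x,\xi)\leq CN^{1/2}\mu^{-(n+1)/4}\|e\|_{\ell^2}$ uniformly.

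For the $L^2$ endpoint I observe that
\begin{equation*}\|f_e\|_{L^2}^2=\int_{B_1(0)\times\S^{n-1}}\|P^{\alpha,\mu}_{(x,\xi)}v\|_{L^2}^2\,dxd\sigma(\xi)=\langle T^{\star}Tv,v\rangle,\end{equation*}
with $T^{\star}T=\int(P^{\alpha,\mu}_{(x,\xi)})^{\star}P^{\alpha,\mu}_{(x,\xi)}\,dxd\sigma(\xi)$. Its principal symbol is $\int|p^{\alpha,\mu}_{(x,\xi)}(y,\eta)|^2\,dxd\sigma(\xi)$; the pointwise amplitude $A^2h^{-n+2\alpha}\mu^{-(n+1)}$ and the $(x,\xi)$-support volume $\mu^{n+1}h^{n-2\alpha}$ at fixed $(y,\eta)$ cancel precisely by the normalisation chosen for $A$, leaving an $O(1)$ symbol. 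Standard $L^2$-boundedness for pseudodifferential operators with bounded compactly supported symbol then gives $\|T\|=O(1)$. Combining with the almost-orthogonality estimate $\|\chi v\|_{L^2}^2\leq C\|e\|_{\ell^2}^2$ (for a fixed smooth cutoff $\chi\in C^\infty_c(\R^n)$ equal to $1$ on the spatial supports of the $P^{\alpha,\mu}_{(x,\xi)}$; valid by Schur's test applied to the Gram matrix whose entries $\widehat{|\chi|^2}((\xi_j-\xi_k)/h)$ decay rapidly in $|\xi_j-\xi_k|/h\geq 1$) yields $\|f_e\|_{L^2}\leq C\|e\|_{\ell^2}$.

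Lyapunov then produces $\|f_e\|_{L^p}\leq C(CN^{1/2}\mu^{-(n+1)/4})^{1-2/p}\|e\|_{\ell^2}=CN^{\frac{1}{2}-\frac{1}{p}}\mu^{-\frac{n+1}{4}+\frac{n+1}{2p}}\|e\|_{\ell^2}$, which is the claim. The main obstacle is the $L^2$ endpoint, specifically matching $\int|p^{\alpha,\mu}_{(x,\xi)}|^2\,dxd\sigma(\xi)$ with the defining normalisation of $A$ after swapping the order of integration; the $L^\infty$ bound is a direct consequence of Lemma \ref{lem:tracebnds}, and the interpolation step is routine.
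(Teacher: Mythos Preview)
Your proposal is correct and follows essentially the same route as the paper: reduce $|G_p(c)-G_p(d)|$ to $\|f_e\|_{L^p}$ via two reverse triangle inequalities, prove the $p=\infty$ and $p=2$ endpoints, and interpolate. Your $L^\infty$ bound via $\lambda_{\max}(NA)\le N\sqrt{\Trace(A^2)}$ is exactly equivalent to the paper's Cauchy--Schwarz step $e^{T}(NA)e\le\|e\|_{\ell^2}^{2}\bigl(\sum_{j,m}|I_{j,m}|^{2}\bigr)^{1/2}$ (since $I_{j,m}=N(A)_{j,m}$ and $\Trace(A^{2})=\sum_{j,m}|A_{j,m}|^{2}$), and your $L^2$ argument via the averaged operator $T^{\star}T$ unpacks to the paper's direct expansion $\sum_{j,m}e_j\bar e_m\int e^{\frac{i}{h}\langle y,\xi_j-\xi_m\rangle}a(y)\,dy$ once one integrates out $(x,\xi)$; the only cosmetic caution is that ``standard $L^{2}$-boundedness'' should be read after that averaging, since the individual symbols $p^{\alpha,\mu}_{(x,\xi)}$ sit in an exotic class.
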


\begin{proof}
Let 
\begin{align*}
u&=\sum_{\xi_{j}\in\Lambda_{\beta}}c_{j}e^{\frac{i}{h}\langle x,\xi_{j}\rangle}& \quad &v=\sum_{\xi_{j}\in \Lambda_{\beta}} d_{j}e^{\frac{i}{h}\langle x,\xi_{j}\rangle}\\
U(x,\xi)&=\norm{ P^{\alpha,\mu}_{(x,\xi)}u(\cdot)}_{L^{2}} & \quad &V(x,\xi)=\norm{ P^{\alpha,\mu}_{(x,\xi)}v(\cdot)}_{L^{2}}.\end{align*}
In this notation
$$|G_{p}(c)-G_{p}(d)|=\left|\norm{U}_{L^{p}(B_{1}(0)\times S^{n-1})}-\norm{V}_{L^{p}(B_{1}(0)\times S^{n-1})}\right|\leq \norm{U-V}_{L^{p}(B_{1}(0)\times\S^{n-1})}.$$
So if we can find estimates for $p=\infty$ and $p=2$ we can interpolate all the others. First let's see the $p=\infty$ case. 
$$U(x,\xi)-V(x,\xi)=\norm{P^{\alpha,\mu}_{(x,\xi)}u(\cdot)}_{L^{2}}-\norm{P^{\alpha,\mu}_{(x,\xi)}v(\cdot)}_{L^{2}}\leq\norm{P^{\alpha,\mu}_{(x,\xi)}(u-v)(\cdot)}_{L^{2}}.$$
$$\norm{P^{\alpha,\mu}_{(x,\xi)}(u-v)(\cdot)}_{L^{2}}=\left(\sum_{j,m}(c_{j}-d_{j})(c_{m}-d_{m})I_{j,m}\right)^{1/2},$$
where $I_{j,m}$ are as in the proof of Lemma \ref{lem:tracebnds}. Applying Cauchy-Schwartz 
$$\norm{P^{\alpha,\mu}_{(x,\xi)}(u-v)(\cdot)}_{L^{2}}\leq\norm{c-d}_{\ell^{2}}\left(\sum_{j,m} I_{j,m}^{2}\right)^{1/4}.$$
In the proof of Lemma \ref{lem:tracebnds} we have already estimated the sum $\sum_{j,m}|I_{j,m}|^{2}$ by
$$\sum_{j,m}|I_{j,m}|^{2}\leq C\mu^{-n-1}N^{2}.$$
So this leads to a Lipschitz bound
$$|G_{\infty}(c)-G_{\infty}(d)|\leq C\mu^{-\frac{n+1}{4}}N^{1/2}=C\mu^{-\frac{n+1}{4}}h^{\frac{\beta-n}{2}}.$$
Now for the $p=2$ case. In that case
\begin{align*}
\norm{U-V}_{L^{2}}&=\int_{B_{1}(0)\times \S^{n-1}}\langle P^{\alpha,\mu}_{(x,\xi)}(u-v),P^{\alpha,\mu}_{(x,\xi)}(u-v)\rangle_{y}dxd\omega(\xi)\\
&=\sum_{j,m}(c_{j}-d_{j})(c_{m}-d_{m})\int_{y}\int_{B_{1}(0)\times \S^{n-1}}\left(P^{\alpha,\mu}_{(x,\xi)}e^{\frac{i}{h}\langle \cdot,\xi_{j}\rangle}\overline{P^{\alpha,\mu}_{(x,\xi)}e^{\frac{i}{h}\langle \cdot,\xi_{m}\rangle}}\right)\Big|_{y}dxd\omega(\xi) dy\\
&=\sum_{j,m}(c_{j}-c_{m})(c_{m}-d_{m})\int e^{\frac{i}{h}\langle y,\xi_{j}-\xi_{m}\rangle}a(y)dy.\end{align*}
To get the last line we have integrated the $(x,\xi)$ variables first, integrating out the cut-off functions. Therefore since the $\xi_{j}$ are spaced at order $h$ (and are therefore almost orthogonal) we have
$$\norm{U-V}_{L^{2}}\leq C\norm{c-d}_{\ell^{2}}.$$
Interpolating with $L^{\infty}$
$$\norm{U-V}_{L^{p}}\leq C N^{\frac{1}{2}-\frac{1}{p}}\mu^{-\frac{n+1}{4}+\frac{n+1}{2p}}\norm{c-d}_{\ell^{2}}$$
which establishes the Lipschitz bounds.

\end{proof}

\section*{Acknowledgments} The author would like to thank Alex Barnett and Xiaolong Han for many interesting discussions on the small-scale structure of random waves. Particular thanks to Alex Barnett for allowing the reproduction of his numerical studies in this paper. 

\bibliography{references}
\bibliographystyle{plain}

\end{document}